\newcommand{\RR}{\ensuremath{\mathbb{R}}}
\newcommand{\CC}{\ensuremath{\mathbb{C}}}
\newcommand{\dd}{\mbox{d}}
\newcommand{\NN}{\ensuremath{\mathbb{N}}}
\newcommand{\op}[1]{\operatorname{#1}}
\numberwithin{equation}{section}
\theoremstyle{plain}
\newtheorem{theor}{Theorem}[section]
\newtheorem{prop}[theor]{Proposition}
\newtheorem{lem}[theor]{Lemma}
\theoremstyle{definition}
\newtheorem{defin}[theor]{Definition}
\theoremstyle{remark}
\newtheorem*{rem}{Remark}
\title[Haagerup property of $q$-Araki-Woods algebras]{$q$-Araki-Woods algebras: extension of second quantisation and Haagerup approximation property}
\author{Mateusz Wasilewski}
\address{Institute of Mathematics of the Polish Academy of Sciences, ul. \'{S}niadeckich 8, 00-656 Warszawa, Poland}
\email{mwasilewski@impan.pl}
\subjclass[2010]{46L10 (primary)}
\thanks{The author was partially supported by the NCN (National Centre of Science) grant  
2014/14/E/ST1/00525 and acknowledges support by the French MAEDI and MENESR and by the
Polish MNiSW through the Polonium programme}
\date{}
\begin{document}
\begin{abstract}
We extend the class of contractions for which the second quantisation on $q$-Araki-Woods algebras can be defined. As a corollary, we prove that all $q$-Araki-Woods algebras possess the Haagerup approximation property.
\end{abstract}

\maketitle
\section{Introduction}
The Haagerup approximation property, along with amenability and weak amenability, started its life as an approximation property of (discrete) groups, although it was always intimately connected with operator algebras, beginning from its first appearance in \cite{MR520930}. This connection was further developed by Choda (cf. \cite{MR718798}), who defined the respective property for tracial von Neumann algebras and proved that a group von Neumann algebra (of a discrete group) possesses the Haagerup property if and only if so does the underlying group. The situation in the general locally compact case is, however, not that pleasant. It resembles the situation with amenability -- injectivty of group von Neumann algebra captures amenability of the group in the discrete case, but not in general. 

Ever since the advent of locally compact quantum groups and their approximation properties (cf. \cite{MR3456763}), it has become crucial to extend many notions beyond the case of finite von Neumann algebras. As in the classical case, there is no hope to define the Haagerup property of a general locally compact quantum group only via its von Neumann algebra. Nevertheless, in the discrete case this should be feasible. In \cite{MR3456763} the authors prove the proposed equivalence for \emph{unimodular} discrete quantum groups. The theory of quantum groups, however, has the unusual feature allowing discrete groups to be non-unimodular. This is a clear motivation to investigate the possibility of extending the definition to the case of non-tracial von Neumann algebras. Recently, two equivalent axiomatisations of the Haagerup property of general von Neumann algebras have been established (cf. \cite{MR3431616} and \cite{MR3395459}).

Whenever a new property is defined, it is useful to have a host of examples to confirm that the definition is a reasonable one. In this article we prove that a wide class of type III von Neumann algebras, the so-called $q$-Araki-Woods algebras introduced by Hiai in \cite{MR2018229} (based on earlier work of Shlyakhtenko, cf. \cite{MR1444786}), possess the Haagerup approximation property. It is a natural extension of the fact that the $q$-Gaussian algebras of Bożejko and Speicher (cf. \cite{MR1463036}) possess the Haagerup property, which seems to be a folklore result.

One can also view this paper as a contribution to the study of the structure of $q$-Araki-Woods algebras. A lot is known about their predecessors, the $q$-Gaussian algebras. They are known to be factors (cf. \cite{MR2164947}), they are non-injective (cf. \cite{MR2091676}), they possess the completely contractive approximation property (cf. \cite{1110.4918}). In the case of $q$-Araki-Woods algebras we have only partial results, e.g. a recent development in the study of factoriality (cf. \cite{1606.04752} and \cite{1607.04027}). The best known result about non-injectivity was obtained by Nou in \cite[Corollary 3]{MR2091676}). So far, the CCAP has been obtained only for free Araki-Woods algebras (\cite{MR2822210}); the general case, however, is likely to require new methods. We hope that this article will prompt further study of $q$-Araki-Woods algebras.

Let us give a brief overview of the paper. In Section \ref{prelim} we introduce the necessary definitions and tools. In Section \ref{extquant} we provide an extension of the second quantisation procedure, necessary for the proof of the Haagerup approximation property. The basic idea is that second quantisation allows us to build approximants on the level of the Hilbert space, which is easier than working directly on the level of the von Neumann algebra.  Section \ref{Haag} contains the main result of this article, which is the following theorem.
\begin{theor}\label{approx}
Let $(\mathcal{H}_{\RR}, (\mathsf{U}_t)_{t \in \RR})$ be a separable\footnote{We impose this condition so that the resulting von Neumann algebra acts on a separable Hilbert space.}, real Hilbert space equipped with a one parameter group of orthogonal transformations $(\mathsf{U}_t)_{t \in \RR}$. Then the $q$-Araki-Woods algebra $\Gamma_q(\mathcal{H}_{\RR}, (\mathsf{U}_t)_{t \in \RR})''$ has the Haagerup approximation property.
\end{theor}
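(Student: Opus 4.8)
The plan is to mimic the classical proof that $q$-Gaussian algebras have the Haagerup property, but working in the non-tracial setting using one of the modern axiomatisations of the Haagerup approximation property (say, via a symmetric KMS-compatible deformation, or equivalently via a net of completely positive, normal maps that converge pointwise to the identity and are "$L^2$-compact" in the appropriate sense). The key idea, already flagged in the introduction, is to produce the approximating maps by \emph{second quantisation}: if one has a suitable one-parameter family of contractions $(T_\varepsilon)$ on the underlying complex Hilbert space $\mathcal H = \mathcal H_{\RR} \otimes_{\mathbb R} \CC$ (or on its completion in the appropriate twisted inner product) that intertwine the modular data, commute with $(\mathsf U_t)$, converge strongly to the identity, and have the property that $\Gamma_q(T_\varepsilon)$ is compact relative to the modular operator, then the second quantisations $\Gamma_q(T_\varepsilon)$ are normal, unital, completely positive, state-preserving maps on $\Gamma_q(\mathcal H_{\RR},(\mathsf U_t))''$ that witness the Haagerup property. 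The technical heart is therefore entirely about \emph{which} contractions admit a well-defined second quantisation in the $q$-Araki-Woods setting — and that is precisely the content of the extension of second quantisation carried out in Section \ref{extquant}, which I would invoke as a black box.

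Concretely, I would proceed as follows. First, recall from Section \ref{prelim} the construction of $\Gamma_q(\mathcal H_{\RR},(\mathsf U_t))''$ on the $q$-Fock space, together with the complexified Hilbert space on which the modular group $(\mathsf U_t)$ analytically continues to give the operator $A$, and the standard subspace / KMS inner product. Second, I would exhibit the natural candidate for the deforming semigroup: the generator of $(\mathsf U_t)$ commutes with $A$, so one can write $\mathsf U_t = A^{it}$ in the appropriate sense, and for a Haagerup deformation one wants operators like $e^{-\varepsilon H}$ for a suitable positive self-adjoint $H$ commuting with both $A$ and having compact resolvent, or more robustly, a direct integral / spectral-multiplier construction $T_\varepsilon = \phi_\varepsilon(\log A \oplus \text{(auxiliary compact piece)})$. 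The point is to choose $T_\varepsilon$ so that (i) $0 \le T_\varepsilon \le 1$ and $T_\varepsilon \to I$ strongly as $\varepsilon \to 0$; (ii) each $T_\varepsilon$ satisfies the hypotheses of the second-quantisation theorem of Section \ref{extquant} — in particular the compatibility with the twisted inner product that makes $\Gamma_q(T_\varepsilon)$ completely positive and normal; and (iii) $T_\varepsilon$ has the spectral-gap/compactness feature ensuring that $\Gamma_q(T_\varepsilon)$ restricted to the orthogonal complement of the vacuum has norm $<1$ on each finite-particle space and, summed up, yields the $L^2$-compactness required by the Haagerup definition.

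Third, I would verify the defining properties of the Haagerup approximation property for the net $(\Gamma_q(T_\varepsilon))$: unitality and state-preservation are immediate from $T_\varepsilon$ being a contraction fixing no extra vacuum mass; complete positivity and normality come from Section \ref{extquant}; pointwise $L^2$-convergence to the identity follows from $T_\varepsilon \to I$ strongly by a density argument on Wick words, since $\Gamma_q(T_\varepsilon)$ acts on an $n$-particle vector essentially as $T_\varepsilon^{\otimes n}$ and these converge strongly; and the compactness of the induced $L^2$-operators $\Phi_\varepsilon$ follows because each $\Phi_\varepsilon$ is a direct sum over particle number $n$ of operators with norm tending to $0$ in $n$ (a geometric decay governed by the spectral gap of $T_\varepsilon$), hence is a norm-limit of finite-rank operators once one also knows each finite-particle block is compact — which holds if the auxiliary compact piece is genuinely compact, and otherwise by a further approximation truncating the spectrum of $A$. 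I expect the main obstacle to be precisely point (ii): arranging a contraction $T_\varepsilon$ that simultaneously respects the modular/KMS structure (so that it stays in the class handled by Section \ref{extquant}) \emph{and} is close enough to a compact perturbation of the identity to force $L^2$-compactness of the second quantisation; in the tracial case one simply takes $e^{-\varepsilon N}$-type maps, but here the deformation must commute with $A$, and reconciling "commutes with $A$" with "compact" is the delicate step, likely resolved by a direct-integral decomposition over the spectrum of $\log A$ together with the separability hypothesis on $\mathcal H_{\RR}$.
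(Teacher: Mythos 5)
Your overall skeleton --- second quantise a damped family of contractions, obtain $L^2$-compactness from geometric decay in the particle number, and get strong convergence by a density argument on Wick words --- is exactly the paper's strategy, and your third paragraph essentially matches the actual verification (the paper uses the maps $\Gamma_q(e^{-t}T_k)$, whose GNS implementations $\mathcal{F}_q(e^{-t}T_k)$ are finite rank on each particle level and decay like $e^{-tn}$ on the $n$-particle space). The genuine gap is in how you propose to produce the contractions, and you have in fact put your finger on it yourself: you require $T_\varepsilon$ to commute with $(\mathsf{U}_t)$ (equivalently with $A$) and hope to reconcile this with compactness by spectral calculus of $\log A$ plus truncation. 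This cannot work in general: if $A$ has purely continuous simple spectrum, its commutant contains no nonzero compact operator, so there are no nonzero finite-rank contractions commuting with $A$, and spectral truncations of $A$ are infinite-rank. The hypothesis ``$T$ intertwines the modular group'' is the hypothesis of Hiai's older second quantisation (Proposition \ref{Hiai}); if it sufficed here, the whole of Section \ref{extquant} would be unnecessary.

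The point of the theorem in Section \ref{extquant} --- the black box you cite but whose hypothesis you have misread --- is precisely that second quantisation exists for \emph{any} contraction $T$ on $\mathcal{H}$ satisfying only $ITI=T$, with no commutation with $(\mathsf{U}_t)$ demanded. This much weaker class does contain finite-rank contractions converging strongly to the identity: that is the Houdayer--Ricard lemma quoted at the start of Section \ref{Haag}. Taking $T_k$ finite rank with $IT_kI=T_k$ and $T_k \to \mathds{1}$ strongly, the maps $\Gamma_q(e^{-t}T_k)$ are the required approximants, and the tension between ``commutes with $A$'' and ``compact'' never has to be resolved. To repair your argument, drop the commutation requirement, verify only the condition $IT_\varepsilon I = T_\varepsilon$, and invoke the Houdayer--Ricard construction instead of a direct-integral decomposition over the spectrum of $\log A$.
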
 
\section{Preliminaries}\label{prelim}
In this section we recall the construction of the $q$-Araki-Woods algebras and the definition of the Haagerup approximation property.
\subsection{q-Araki-Woods algebras}
The material about, $q$-Araki-Woods algebras that follows, with a much more detailed exposition, can be easily found in \cite{MR2018229}.

We start from a real separable Hilbert space $\mathcal{H}_{\RR}$ equipped with a one-parameter group of orthogonal transformations $(\mathsf{U}_t)_{t \in \RR}$. This extends to a unitary group on the complexification, denoted $\mathcal{H}_{\CC}$, that has the form $\mathsf{U}_t = A^{it}$ for some positive, injective operator $A$. We define a new inner product on $\mathcal{H}_{\CC}$ by $\langle x, y \rangle_{\mathsf{U}} := \langle \frac{2A}{1+A} x, y \rangle$. The completion with respect to this inner product is denoted by $\mathcal{H}$. Let us denote by $I$ the conjugation on $\mathcal{H}_{\CC}$ -- it is a closed operator on $\mathcal{H}$ because the new inner product coincides with the old one on $\mathcal{H}_{\RR}$. Consider now the $q$-Fock space $\mathcal{F}_q(\mathcal{H})$ (cf. \cite{MR1463036}).
\begin{defin}
For any $h \in \mathcal{H}_{\RR}$ define $s_q(h) = a_{q}^{\ast}(h) + a_q(h)$, where $a_q^{\ast}(h)$ and $a_q(h)$ are the creation and annihilation operators on $\mathcal{F}_q(\mathcal{H})$. \textbf{$q$-Araki-Woods algebra} is the von Neumann algebra generated by the set of operators $\{s_q(h): h \in \mathcal{H}_{\RR}\}$. We will denote it by $\Gamma_q(\mathcal{H})$. 

There are two special cases considered previously:
\begin{enumerate}
\item If the the group $(\mathsf{U}_t)_{t \in \RR}$ is trivial, i.e. $\mathsf{U}_t = \op{Id}$, then we denote the algebra $\Gamma_q(\mathcal{H})$ by $\Gamma_q(\mathcal{H}_{\RR})$ and call it a \textbf{$q$-Gaussian algebra} (cf. \cite{MR1463036});
\item If $q=0$, then $\Gamma_0(\mathcal{H})$ is called a \textbf{free Araki-Woods factor}; they were introduced earlier by Shlyakhtenko (cf. \cite{MR1444786})
\end{enumerate}

\end{defin}   
\begin{rem}
The \textbf{vacuum vector} $\Omega:= 1 \in \CC = \mathcal{H}^{\otimes 0} \subset \mathcal{F}_q(\mathcal{H})$ is both cyclic and separating for $\Gamma_q(\mathcal{H})$. Therefore, for $\xi$ in $\mathcal{F}_q(\mathcal{H})$, if there exists an operator $x \in \Gamma_q(\mathcal{H})$ such that $x\Omega=\xi$, then this operator is unique; we denote it by $W(\xi)$ and call it a \textbf{Wick word}.
\end{rem}
We would like to recall a useful construction of operators on $q$-Fock spaces.
\begin{defin}
Let $T: \mathcal{K} \to \mathcal{H}$ be a contraction between two Hilbert spaces. Then there exists a contraction $\mathcal{F}_q(T):\mathcal{F}_q(\mathcal{K}) \to \mathcal{F}_q(\mathcal{H})$, called \textbf{first quantisation} of $T$, which is defined on finite tensors by $\mathcal{F}_q(T)(v_1 \otimes \dots\otimes v_n) = Tv_1 \otimes \dots \otimes Tv_n$.
\end{defin}
\subsection{Haagerup approximation property}
We will follow the approach of Caspers and Skalski (cf. \cite{MR3431616}); for a different approach, based on standard forms, see \cite{MR3395459}. 
\begin{defin}
Let $(\mathsf{M}, \varphi)$ be a von Neumann algebra (with separabla predual) equipped with a normal, faithful, semifinite weight $\varphi$. It has \textbf{Haagerup approximation property} if there exists a sequence of unital, normal, completely positive (unital, completely positive will be abbreviated to ucp from now on) maps $(T_k: \mathsf{M} \to \mathsf{M})_{k \in \NN}$ such that:
\begin{enumerate}[{\normalfont (i)}]
\item $\varphi \circ T_k \leqslant \varphi$ for all $k \in \NN$;
\item GNS-implementations $T_k: L^{2}(\mathsf{M}, \varphi) \to L^{2}(\mathsf{M}, \varphi)$ are compact and converge to $\mathds{1}_{L^{2}(\mathsf{M}, \varphi)}$ strongly.
\end{enumerate}
\begin{rem}
It was proved in \cite{MR3431616} that Haagerup approximation property is an intrinsic property of the von Neumann algebra $\mathsf{M}$, i.e. it does not depend on the choice of the normal, faithful, semifinite weight.
\end{rem}
\end{defin}
\section{Second quantisation}\label{extquant}
In this section we will prove that second quantisation can be defined for arbitrary contractions on $\mathcal{H}_{\RR}$ that extend to contractions on $\mathcal{H}$; this condition will be written succinctly as $ITI = T$, where the left-hand side is understood as the closure\footnote{Note that, a priori, $ITI$ can be unbounded and not even densely defined.} of the product. Motivation comes from the paper \cite{MR2822210}, where the analogous generalisation of second quantisation is an indispensable tool for obtaining approximation properties in the free case. Before we give the details of the proof, let us first recall how to show that the second quantisation is always available in the case of $q$-Gaussian algebras so that the similarities and the differences are clearly visible (cf. \cite[Theorem 2.11]{MR1463036}). Before that, we need to recall the Wick formula (cf. \cite[Proposition 2.7]{MR1463036}). 
\begin{lem}\label{wick}
Suppose that $e_1,\dots, e_n \in \mathcal{H}_{\CC}$. Then
\begin{equation}\label{Wickformula}
W(e_1\otimes\dots\otimes e_n) = \sum_{k=0}^{n} \sum_{i_1,\dots,i_k,j_{k+1},\dots,j_n} a_q^{\ast}(e_{i_1})\dots a_q^{\ast}(e_{i_k})a_q(I e_{j_{k+1}})\dots a_q(I e_{j_n}) q^{i(I_1,I_2)},
\end{equation}
where $I_1 = \{i_1<\dots< i_k\}$ and $I_2 = \{j_{k+1}<\dots<j_n\}$ form a partition of the set $\{1,\dots,n\}$ and $i(I_1,I_2)$ is the number of crossings between $I_1$ and $I_2$, equal to $\sum_{l=1}^{k} (i_{l}-l)$. 
\end{lem}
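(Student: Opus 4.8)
The plan is to prove the formula by induction on $n$, exploiting uniqueness of Wick words: since $\Omega$ is separating, it suffices to check that the operator on the right-hand side, call it $R_n(e_1,\dots,e_n)$, lies in $\Gamma_q(\mathcal{H})$ and satisfies $R_n(e_1,\dots,e_n)\Omega = e_1\otimes\dots\otimes e_n$. The second property is immediate: applying $R_n$ to $\Omega$ kills every summand containing an annihilation operator, so only the term with $k=n$ (all creators, the partition $I_1=\{1<\dots<n\}$, $I_2=\emptyset$, zero crossings) survives, yielding $a_q^{\ast}(e_1)\dots a_q^{\ast}(e_n)\Omega = e_1\otimes\dots\otimes e_n$. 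The base cases $n=0,1$ are clear; in particular $R_1(e)=a_q^{\ast}(e)+a_q(Ie)$, which equals $s_q(\op{Re} e)+i\,s_q(\op{Im} e)$ and hence lies in $\Gamma_q(\mathcal{H})$. Note that the conjugation $I$ is exactly what makes each summand \emph{complex}-linear in every $e_j$: $a_q^{\ast}$ is linear, while $a_q(I\,\cdot\,)$ is a composition of two conjugate-linear maps and therefore linear.

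For the inductive step I would establish the operator identity
\[
R_{n+1}(e_0,\dots,e_n) = W(e_0)\,R_n(e_1,\dots,e_n) - \sum_{i=1}^{n} q^{\,i-1}\langle Ie_0, e_i\rangle_{\mathsf{U}}\, R_{n-1}(e_1,\dots,\widehat{e_i},\dots,e_n),
\]
where $W(e_0)=a_q^{\ast}(e_0)+a_q(Ie_0)\in\Gamma_q(\mathcal{H})$. Granting this, the right-hand side lies in $\Gamma_q(\mathcal{H})$ by the inductive hypothesis (a product and a combination of elements of the algebra), so $R_{n+1}\in\Gamma_q(\mathcal{H})$, which closes the induction and identifies $R_{n+1}$ with $W(e_0\otimes\dots\otimes e_n)$. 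To prove the identity one expands $W(e_0)R_n = a_q^{\ast}(e_0)R_n + a_q(Ie_0)R_n$. The first term prepends $a_q^{\ast}(e_0)$ to every summand; since $0$ is the smallest index it contributes no crossing, so $a_q^{\ast}(e_0)R_n$ is precisely the sum of the summands of $R_{n+1}$ with $0\in I_1$. In the second term I would push $a_q(Ie_0)$ to the right through the creators using the $q$-commutation relation $a_q(\zeta)a_q^{\ast}(\eta) = q\,a_q^{\ast}(\eta)a_q(\zeta) + \langle\zeta,\eta\rangle_{\mathsf{U}}$.

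Commuting $a_q(Ie_0)$ past all $k$ creators via the leading term accumulates a factor $q^{k}$ and lands it just before the annihilators, reproducing exactly the summands of $R_{n+1}$ with $0\in I_2$ (there $0$, being smallest, crosses each of the $k$ elements of $I_1$, which accounts for the extra $q^{k}$). The remaining contraction terms, arising when $a_q(Ie_0)$ meets the $l$-th creator $a_q^{\ast}(e_{i_l})$, produce $q^{\,l-1}\langle Ie_0,e_{i_l}\rangle_{\mathsf{U}}$ times the word with $e_{i_l}$ deleted, and these are what the correction sum must cancel. The main obstacle is precisely this crossing-number bookkeeping: one must check that deleting the element $i=i_l$ changes the count by $i(I_1,I_2) = i(\mathrm{reduced}) + (i-l)$, since among $\{1,\dots,i-1\}$ exactly $l-1$ indices lie in $I_1$, so that the accumulated power $q^{\,l-1}\cdot q^{\,i(I_1,I_2)}$ collapses to $q^{\,i-1}\cdot q^{\,i(\mathrm{reduced})}$ and the contraction terms assemble into $\sum_i q^{\,i-1}\langle Ie_0,e_i\rangle_{\mathsf{U}}\,R_{n-1}(\dots,\widehat{e_i},\dots)$, cancelling the correction and leaving exactly $R_{n+1}$. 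Keeping the conjugation $I$ and the deformed inner product $\langle\cdot,\cdot\rangle_{\mathsf{U}}$ consistent throughout (the $I$ entering through the annihilation operator) is the other point requiring care, though it adds no genuine difficulty beyond the tracial $q$-Gaussian case.
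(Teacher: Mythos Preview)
Your inductive argument is correct, including the crossing-number bookkeeping $i(I_1,I_2)=i(\text{reduced})+(i_l-l)$, which is exactly what is needed to make the contraction terms assemble into the correction sum. Note, however, that the paper does not give its own proof of this lemma: it is simply recalled from \cite[Proposition~2.7]{MR1463036}, so there is no in-paper proof to compare against; your approach is the standard one found in that reference.
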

\begin{theor}[{ \cite[Theorem 2.11]{MR1463036} }]
\label{bozejko}
Let $\mathcal{K}_{\RR}$ and $\mathcal{H}_{\RR}$ be real Hilbert spaces and let $T: \mathcal{K}_{\RR} \to \mathcal{H}_{\RR}$ be a contraction. Then there exists a ucp map $\Gamma_q(T): \Gamma_q(\mathcal{K}_{\RR}) \to \Gamma_q(\mathcal{H}_{\RR})$ such that $\Gamma_q(T) W(e_1\otimes \dots\otimes e_n) = W(Te_1\otimes\dots\otimes Te_n)$ for any $e_1,\dots,e_n \in \mathcal{K}_{\RR}$. Moreover, this map preserves the vacuum state.
\end{theor}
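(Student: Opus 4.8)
The plan is to realise $T$ as a corner of an orthogonal transformation and to build $\Gamma_q(T)$ as a composition of maps each of which is manifestly normal and ucp: a $*$-homomorphism (an inclusion), a $*$-isomorphism, and a conditional expectation. In this way only first quantisation of \emph{orthogonal} maps and of subspace inclusions/projections is used, and the delicate object ``second quantisation of a genuine contraction'' is never invoked directly.

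First I would record functoriality of first quantisation in the two easy special cases. If $U$ is an orthogonal transformation of a real Hilbert space, then $\mathcal{F}_q(U)$ is unitary with $\mathcal{F}_q(U)^{-1} = \mathcal{F}_q(U^{-1})$ and $\mathcal{F}_q(U)\Omega = \Omega$; the automorphism $\op{Ad}\mathcal{F}_q(U)$ sends $s_q(h)$ to $s_q(Uh)$, hence restricts to a normal, vacuum-state-preserving $*$-isomorphism between the corresponding $q$-Gaussian algebras which, by the description of Wick words in the Remark, acts by $W(\xi) \mapsto W(\mathcal{F}_q(U)\xi)$. Next, if $\mathcal{H}_0 \subseteq \mathcal{L}_{\RR}$ is a closed real subspace with orthogonal projection $P_0$, then $\Gamma_q(\mathcal{H}_0)$ sits inside $\Gamma_q(\mathcal{L}_{\RR})$ as the von Neumann algebra generated by $\{s_q(h): h \in \mathcal{H}_0\}$, this inclusion being a normal unital $*$-homomorphism; moreover, since $\Gamma_q(\mathcal{L}_{\RR})$ carries a tracial vacuum state, there is a (unique, trace-preserving) normal conditional expectation $E \colon \Gamma_q(\mathcal{L}_{\RR}) \to \Gamma_q(\mathcal{H}_0)$, implemented by the orthogonal projection $\mathcal{F}_q(P_0)$ of $\mathcal{F}_q(\mathcal{L}_{\RR})$ onto $\mathcal{F}_q(\mathcal{H}_0) = \overline{\Gamma_q(\mathcal{H}_0)\Omega}$. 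A short computation from the Wick formula (Lemma~\ref{wick}), together with the fact that $\Gamma_q(\mathcal{H}_0)$ and $\Gamma_q(\mathcal{H}_0^{\perp})$ are $q$-independent with respect to the vacuum, gives the explicit action $E\bigl(W(\eta_1 \otimes \dots \otimes \eta_n)\bigr) = W(P_0\eta_1 \otimes \dots \otimes P_0\eta_n)$.

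Now for the dilation: set $\mathcal{L}_{\RR} = \mathcal{K}_{\RR} \oplus \mathcal{H}_{\RR}$ and
\[
O = \begin{pmatrix} (1 - T^{*}T)^{1/2} & -T^{*} \\ T & (1 - TT^{*})^{1/2} \end{pmatrix},
\]
which is a well-defined orthogonal transformation of $\mathcal{L}_{\RR}$: the square roots make sense because $T$ is a contraction, and $O^{\mathrm t}O = \op{Id} = O\,O^{\mathrm t}$ follows from the intertwining relations $T(1-T^{*}T)^{1/2} = (1-TT^{*})^{1/2}T$ and $T^{*}(1-TT^{*})^{1/2} = (1-T^{*}T)^{1/2}T^{*}$. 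With $\iota \colon \mathcal{K}_{\RR} \hookrightarrow \mathcal{L}_{\RR}$ the inclusion and $P \colon \mathcal{L}_{\RR} \to \mathcal{H}_{\RR}$ the projection, one has $T = P\,O\,\iota$, hence $\mathcal{F}_q(T) = \mathcal{F}_q(P)\,\mathcal{F}_q(O)\,\mathcal{F}_q(\iota)$ by functoriality of first quantisation on elementary tensors. I would then define
\[
\Gamma_q(T) := E_{\mathcal{H}_{\RR}} \,\circ\, \op{Ad}\mathcal{F}_q(O)|_{\Gamma_q(\mathcal{L}_{\RR})} \,\circ\, \bigl(\Gamma_q(\mathcal{K}_{\RR}) \hookrightarrow \Gamma_q(\mathcal{L}_{\RR})\bigr),
\]
a composition of normal ucp maps, hence a normal ucp map $\Gamma_q(\mathcal{K}_{\RR}) \to \Gamma_q(\mathcal{H}_{\RR})$ preserving the vacuum state. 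Pushing a Wick word $W(e_1 \otimes \dots \otimes e_n)$ through the three arrows and using the formulas of the previous paragraph yields $W(P O \iota e_1 \otimes \dots \otimes P O \iota e_n) = W(Te_1 \otimes \dots \otimes Te_n)$; since such Wick words span a $\sigma$-weakly dense $*$-subalgebra of $\Gamma_q(\mathcal{K}_{\RR})$ and $\Gamma_q(T)$ is normal, this determines the map and completes the proof.

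The proof is essentially soft once one has the dilation $T = P\,O\,\iota$; the only step with any content is the explicit Wick-word action of the conditional expectation $E$ (which also makes the vacuum-preservation of $\Gamma_q(T)$ transparent), and even that is a routine induction on $n$ using Lemma~\ref{wick} and the $q$-independence of $\Gamma_q(\mathcal{H}_0)$ and $\Gamma_q(\mathcal{H}_0^{\perp})$. So I would not expect a serious obstacle in the $q$-Gaussian case. The reason for isolating this argument is rather that it breaks down for $q$-Araki--Woods algebras: there an orthogonal transformation of $\mathcal{H}_{\RR}$ --- in particular the dilation $O$ --- need not extend to a bounded operator on the completion $\mathcal{H}$, so $\mathcal{F}_q(O)$ is unavailable, and bridging this gap is exactly the purpose of Section~\ref{extquant}.
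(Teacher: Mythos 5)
Your argument is correct and follows essentially the same route as the paper: dilate $T$ to an orthogonal transformation $O$ of $\mathcal{K}_{\RR}\oplus\mathcal{H}_{\RR}$ with $T=PO\iota$, and quantise the three factors separately (the sign convention in your dilation differs harmlessly from the paper's $U_T$). Two points of comparison are worth recording. For the projection step the paper does not invoke traciality: it uses the compression $x\mapsto \mathcal{F}_q(P)x\mathcal{F}_q(P)^{\ast}$, which is ucp on all of $B(\mathcal{F}_q(\mathcal{K}_{\CC}\oplus\mathcal{H}_{\CC}))$, and verifies on Wick words, via the Wick formula \eqref{Wickformula} and the intertwining relations $a_q(v)\mathcal{F}_q(R)=\mathcal{F}_q(R)a_q(R^{\ast}v)$ and $\mathcal{F}_q(R)a_q^{\ast}(v)=a_q^{\ast}(Rv)\mathcal{F}_q(R)$, that it sends $W(e_1\otimes\dots\otimes e_n)$ to $W(Pe_1\otimes\dots\otimes Pe_n)$; your trace-preserving conditional expectation yields the same map here, but the compression formulation is the one that carries over verbatim to the non-tracial setting of Proposition \ref{Hiai} and to the Toeplitz-algebra variant, which is why the paper phrases it that way. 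More importantly, the one step of the paper's proof with genuine content is exactly the one you assert rather than prove: that $\Gamma_q(\mathcal{K}_{\RR})$, defined on $\mathcal{F}_q(\mathcal{K}_{\CC})$, is normally and vacuum-preservingly isomorphic to the von Neumann subalgebra of $\Gamma_q(\mathcal{K}_{\RR}\oplus\mathcal{H}_{\RR})$ generated by $\{s_q(h):h\in\mathcal{K}_{\RR}\oplus\{0\}\}$, with Wick words corresponding to Wick words. The paper establishes this by showing that $x\mapsto\mathcal{F}_q(\iota^{\ast})x\mathcal{F}_q(\iota)$ is a $\ast$-homomorphism on that subalgebra (its elements leave $\mathcal{F}_q(\mathcal{K}_{\CC})$ invariant) which is injective because $\Omega$ is separating, and then takes its inverse as $\Gamma_q(\iota)$. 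Since this identification is standard in the tracial case, your omission is presentational rather than a mathematical gap, but it should at least be flagged with an argument of this kind; likewise your Wick-word formula for $E$ follows more directly from the implementation of $E$ by $\mathcal{F}_q(P_0)$ together with the separating vacuum than from an appeal to $q$-independence.
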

\begin{proof}
To prove the existence, we will first dilate $T$ to an orthogonal transformation $U_T$, i.e. define $U_T = \left[ \begin{array}{cc} \left(\mathds{1}_{\mathcal{K}_{\RR}} - T^{\ast}T\right)^{\frac{1}{2}}& T^{\ast} \\ T & -\left(\mathds{1}_{\mathcal{H}_{\RR}} - TT^{\ast}\right)^{\frac{1}{2}} \end{array} \right]$, an orthogonal operator on $\mathcal{K}_{\RR} \oplus \mathcal{H}_{\RR}$ such that $T = PU_T \iota$, where $\iota: \mathcal{K}_{\RR} \to \mathcal{K}_{\RR} \oplus \mathcal{H}_{\RR}$ is the inclusion onto the first summand and $P: \mathcal{K}_{\RR} \oplus \mathcal{H}_{\RR} \to \mathcal{H}_{\RR}$ is the orthogonal projection onto the second summand. We will define separately $\Gamma_{q}(\iota)$, $\Gamma(U_T)$, and $\Gamma_q(P)$ and then define $\Gamma_q(T):=\Gamma_q(P)\Gamma_q(U_T) \Gamma_q(\iota)$. The maps $\Gamma_q(P)$ and $\Gamma_q(U_T)$ are easy to define, so we will start with them. We define $\Gamma_q(P)x:= \mathcal{F}_q(P)x \mathcal{F}_q(P)^{\ast}$. This is a normal ucp map from $B(\mathcal{F}_q(\mathcal{K}_{\CC}\oplus\mathcal{H}_{\CC}))$ to $B(\mathcal{F}_q(\mathcal{H}_{\CC}))$, we just have to check that it maps $W(e_1\otimes\dots\otimes e_n)$ to $W(Pe_1\otimes\dots\otimes Pe_n)$. To this end, we will use the Wick formula \eqref{Wickformula}. It suffices to show that
\[
\Gamma_q(P)(a_q^{\ast}(v_1)\dots a_q^{\ast}(v_k)a_q(v_{k+1})\dots a_q(v_n)) = a_q^{\ast}(Pv_1)\dots a_q^{\ast}(Pv_k)a_q(Pv_{k+1})\dots a_q(Pv_n).
\]
We will use the fact that $a_q(v)\mathcal{F}_q(T) = \mathcal{F}_q(T)a_q(T^{\ast}v)$ and $\mathcal{F}_q(T) a_q^{\ast}(v) = a_q^{\ast}(Tv)\mathcal{F}_q(T)$. An easy application of this shows that $\mathcal{F}_q(P)a_q^{\ast}(v_1)\dots a_q^{\ast}(v_k)a_q(v_{k+1})\dots a_q(v_n)\mathcal{F}_q(P)^{\ast}$ is equal to $a_q^{\ast}(Pv_1)\dots a_q^{\ast}(Pv_k)\mathcal{F}_q(PP^{\ast})a_q(Pv_{k+1})\dots a_q(Pv_n)$ and we are done, because $PP^{\ast} = \mathds{1}_{\mathcal{H}_{\RR}}$. We define $\Gamma_q(U_T)$ analogously: $\Gamma_q(U_T)x  = \mathcal{F}_q(U_T) x \mathcal{F}_q(U_T)^{\ast}$. The same computation as in the case of $P$ shows that $\Gamma_q(U_T) W(e_1\otimes\dots\otimes e_n) = W(U_T e_1 \otimes \dots \otimes U_T e_n)$.

Now we have to deal with $\Gamma_q(\iota)$. Since $\iota \iota^{\ast} \neq \mathds{1}_{\mathcal{K}_{\RR} \oplus \mathcal{H}_{\RR}}$, the previous approach does not work. We know, however, that $\Gamma_q(\iota)$ ought to be the inclusion of $\Gamma_q(\mathcal{K}_{\RR})$ onto a von Neumann subalgebra of $\Gamma_q(\mathcal{K}_{\RR} \oplus \mathcal{H}_{\RR})$ generated by the operators $\{s_q(v): v \in \mathcal{K}_{\RR} \oplus \{0\} \subset \mathcal{K}_{\RR} \oplus \mathcal{H}_{\RR}\}$; denote the latter by $\Gamma_q(\mathcal{K}_{\RR}, \mathcal{K}_{\RR} \oplus \mathcal{H}_{\RR})$. To construct $\Gamma_q(\iota)$, we will define a map from $\Gamma_q(\mathcal{K}_{\RR}, \mathcal{K}_{\RR} \oplus \mathcal{H}_{\RR})$ onto $\Gamma_q(\mathcal{K}_{\RR})$ and show that it is an injective, hence isometric, $\ast$-homomorphism, therefore it has an inverse, which will be the sought $\Gamma_q(\iota)$. So far, we have a map $\Gamma_q(\iota^{\ast}): \Gamma_q(\mathcal{K}_{\RR} \oplus \mathcal{H}_{\RR}) \to \Gamma_q(\mathcal{K}_{\RR})$. Let us show that this map, when restricted to $\Gamma_q(\mathcal{K}_{\RR}, \mathcal{K}_{\RR} \oplus \mathcal{H}_{\RR})$, is a $\ast$-homomorphism. To show that, note that every member of the generating set of $\Gamma_q(\mathcal{K}_{\RR}, \mathcal{K}_{\RR} \oplus \mathcal{H}_{\RR})$ preserves the subspace $\mathcal{F}_q(\mathcal{K}_{\CC}) \subset \mathcal{F}_q(\mathcal{K}_{\CC} \oplus \mathcal{H}_{\CC})$; it follows that every element of $\Gamma_q(\mathcal{K}_{\RR}, \mathcal{K}_{\RR} \oplus \mathcal{H}_{\RR})$ enjoys this property. Let us take two elements $x,y \in \Gamma_q(\mathcal{K}_{\RR}, \mathcal{K}_{\RR} \oplus \mathcal{H}_{\RR})$ and compute
\[
\Gamma_q(\iota^{\ast})(xy) = \mathcal{F}_q(\iota^{\ast})xy \mathcal{F}_q(\iota) = \mathcal{F}_q(\iota^{\ast})x \mathcal{F}_q(\iota) \mathcal{F}_q(\iota^{\ast}) y \mathcal{F}_q(\iota),
\]
where the second equality follows from the fact that $\mathcal{F}_q(\iota \iota^{\ast})$ is the orthogonal projection from $\mathcal{F}_q(\mathcal{K}_{\CC} \oplus \mathcal{H}_{\CC})$ onto $\mathcal{F}_q(\mathcal{K}_{\CC})$ and the image of $y\mathcal{F}_q(\iota)$ is contained in $\mathcal{F}_q(\mathcal{K}_{\CC})$. Therefore $\Gamma_q(\iota^{\ast}): \Gamma_q(\mathcal{K}_{\RR}, \mathcal{K}_{\RR} \oplus \mathcal{H}_{\RR}) \to \Gamma_q(\mathcal{K}_{\RR})$ is a $\ast$-homomorphism. We will check now that it is injective. Suppose then that $\Gamma_q(\iota^{\ast})x=0$ for some $x \in \Gamma_q(\mathcal{K}_{\RR}, \mathcal{K}_{\RR} \oplus \mathcal{H}_{\RR})$. It follows that $\Gamma_q(\iota^{\ast})x\Omega = 0$. We have $\Gamma_q(\iota^{\ast})x\Omega = \mathcal{F}_q(\iota^{\ast})x\mathcal{F}_q(\iota)\Omega$ and $\mathcal{F}_q(\iota) \Omega= \Omega$, seen now as the vacuum vector in $\mathcal{F}_q(\mathcal{K}_{\CC} \oplus \mathcal{H}_{\CC})$. But we already know that $x\Omega \in \mathcal{F}_q(\mathcal{K}_{\CC}) \subset \mathcal{F}_q(\mathcal{K}_{\CC} \oplus \mathcal{H}_{\CC})$, so from $\mathcal{F}_q(\iota^{\ast})x\Omega=0$ we can deduce that $x\Omega=0$, therefore $x=0$ as $\Omega$ is a separating vector for $\Gamma_q(\mathcal{K}_{\RR} \oplus \mathcal{H}_{\RR})$. We proved that $\Gamma_q(\iota^{\ast}): \Gamma_q(\mathcal{K}_{\RR}, \mathcal{H}_{\RR} \oplus \mathcal{H}_{\RR}) \to \Gamma_q(\mathcal{K}_{\RR})$ is an isometric $\ast$-isomorphism, hence it has an inverse and we call this inverse $\Gamma_q(\iota)$; it is clear that $\Gamma_q(\iota)W(e_1\otimes\dots\otimes e_n) = W(\iota e_1\otimes \dots \otimes \iota e_n)$. It is easy to see that the vacuum state is preserved, so this finishes the proof.
\end{proof}  
The following extension, with almost the same proof, is due to Hiai (cf. \cite{MR2018229}):
\begin{prop}\label{Hiai}
Let $(\mathcal{K}_{\RR}, (\mathsf{U}_t)_{t \in \RR})$ and $(\mathcal{H}_{\RR}, (\mathsf{V}_t)_{t \in \RR})$ be two real Hilbert spaces equipped with one parameter groups of orthogonal transformations. Suppose that $T: \mathcal{K}_{\RR} \to \mathcal{H}_{\RR}$ is a contraction such that $T \mathsf{U}_t = \mathsf{V}_t T$ for all $t \in \RR$. Then there is a normal ucp map $\Gamma_q(T): \Gamma_q(\mathcal{K}) \to \Gamma_q(\mathcal{H})$ extending $W(e_1\otimes \dots \otimes e_n) \mapsto W(Te_1 \otimes \dots \otimes Te_n)$.
\end{prop}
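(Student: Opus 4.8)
The plan is to follow the dilation strategy of the proof of Theorem \ref{bozejko} essentially verbatim; the one genuinely new point is to verify that the intertwining hypothesis $T\mathsf{U}_t = \mathsf{V}_t T$ is exactly what makes every operator occurring in the dilation descend to the \emph{deformed} completions $\mathcal{K}$ and $\mathcal{H}$, rather than merely to $\mathcal{K}_{\CC}$ and $\mathcal{H}_{\CC}$. First I would record the consequences of the hypothesis on the complexifications. Writing $\mathsf{U}_t = A_{\mathcal{K}}^{it}$ and $\mathsf{V}_t = A_{\mathcal{H}}^{it}$, the relation $TA_{\mathcal{K}}^{it} = A_{\mathcal{H}}^{it}T$ yields $Tf(A_{\mathcal{K}}) = f(A_{\mathcal{H}})T$ for every bounded Borel function $f$; applying this to $f(x) = \bigl(\frac{2x}{1+x}\bigr)^{1/2}$ and calling $B_{\mathcal{K}}, B_{\mathcal{H}}$ the resulting bounded positive operators, one gets $TB_{\mathcal{K}} = B_{\mathcal{H}}T$, hence for $x \in \mathcal{K}_{\CC}$
\[
\| Tx \|_{\mathsf{V}} = \| B_{\mathcal{H}} Tx \| = \| TB_{\mathcal{K}} x \| \leqslant \| B_{\mathcal{K}} x \| = \| x \|_{\mathsf{U}},
\]
so $T$ extends to a contraction $\mathcal{K} \to \mathcal{H}$. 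Taking adjoints in the intertwining relation (the $\mathsf{U}_t$, $\mathsf{V}_t$ are orthogonal) gives $T^{\ast}\mathsf{V}_t = \mathsf{U}_t T^{\ast}$, so $T^{\ast}T$ commutes with every $\mathsf{U}_t$ and $TT^{\ast}$ commutes with every $\mathsf{V}_t$.

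Second, I would form the dilation exactly as in Theorem \ref{bozejko}: the orthogonal operator $U_T$ on $\mathcal{K}_{\RR} \oplus \mathcal{H}_{\RR}$, now equipped with the one-parameter group $\mathsf{U}_t \oplus \mathsf{V}_t$. The relations above show that every entry of the block matrix $U_T$ commutes, or intertwines, appropriately with $\mathsf{U}_t \oplus \mathsf{V}_t$, so $U_T$ commutes with $\mathsf{U}_t \oplus \mathsf{V}_t$; the same square-root argument as above then shows that $U_T$ extends to a unitary on the deformation of $\mathcal{K}_{\CC} \oplus \mathcal{H}_{\CC}$ by $\mathsf{U}_t \oplus \mathsf{V}_t$, which I denote $\mathcal{K} \oplus \mathcal{H}$. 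Since that deformed inner product is block-diagonal, the inclusion $\iota$ of $\mathcal{K}_{\RR}$ onto the first summand is isometric and the projection $P$ onto the second summand is a contraction for the deformed inner products; in particular $\mathcal{F}_q(P)$, $\mathcal{F}_q(U_T)$ and $\mathcal{F}_q(\iota)$ are well defined on the relevant deformed Fock spaces, and the deformed inner product restricted to $\mathcal{K}_{\CC} \oplus \{0\}$ coincides with the one on $\mathcal{K}_{\CC}$.

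Third, I would repeat the three constructions. Put $\Gamma_q(U_T)(x) = \mathcal{F}_q(U_T)x\mathcal{F}_q(U_T)^{\ast}$ and $\Gamma_q(P)(x) = \mathcal{F}_q(P)x\mathcal{F}_q(P)^{\ast}$; these are a normal $\ast$-automorphism and a normal ucp map respectively, and, using the relations $a_q(v)\mathcal{F}_q(S) = \mathcal{F}_q(S)a_q(S^{\ast}v)$ and $\mathcal{F}_q(S)a_q^{\ast}(v) = a_q^{\ast}(Sv)\mathcal{F}_q(S)$ together with $U_TU_T^{\ast} = \mathds{1}$, $PP^{\ast} = \mathds{1}$ and the Wick formula \eqref{Wickformula}, they send $W(e_1\otimes\dots\otimes e_n)$ to $W(U_Te_1\otimes\dots\otimes U_Te_n)$, resp. $W(Pe_1\otimes\dots\otimes Pe_n)$, exactly as in the proof of Theorem \ref{bozejko}. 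For $\Gamma_q(\iota)$ the argument is unchanged: $\mathcal{F}_q(\iota^{\ast})(\cdot)\mathcal{F}_q(\iota)$, restricted to the von Neumann subalgebra of $\Gamma_q(\mathcal{K}\oplus\mathcal{H})$ generated by $\{s_q(v): v \in \mathcal{K}_{\RR}\oplus\{0\}\}$, is a $\ast$-homomorphism because every such $s_q(v)$, hence every element of that subalgebra, preserves $\mathcal{F}_q(\mathcal{K}) \subseteq \mathcal{F}_q(\mathcal{K}\oplus\mathcal{H})$, and it is injective since $\mathcal{F}_q(\iota)\Omega = \Omega$ and $\Omega$ is separating; its inverse is the desired $\Gamma_q(\iota)$. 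Finally set $\Gamma_q(T) := \Gamma_q(P)\Gamma_q(U_T)\Gamma_q(\iota)$, which is normal, ucp and vacuum-preserving, and, since $T = PU_T\iota$, maps $W(e_1\otimes\dots\otimes e_n)$ to $W(Te_1\otimes\dots\otimes Te_n)$.

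The main obstacle is the first step: over the complexifications all the identities are purely formal, but $\frac{2A}{1+A}$ can have unbounded inverse and $A$ itself is unbounded, so the contractivity on the \emph{completed} deformed spaces genuinely requires the intertwining hypothesis — a bare contraction of $\mathcal{K}_{\RR}$ need not extend to $\mathcal{H}$. This is precisely the role played by the condition $ITI = T$ in the more general second quantisation of Section \ref{extquant}; here it is subsumed by $T\mathsf{U}_t = \mathsf{V}_t T$. Once the extension to the deformed spaces is in hand, the remainder is a line-by-line transcription of the proof of Theorem \ref{bozejko}.
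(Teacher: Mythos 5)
Your proposal is correct and follows exactly the route of the paper's own (much terser) proof: decompose $T = PU_T\iota$, equip $\mathcal{K}_{\RR}\oplus\mathcal{H}_{\RR}$ with the group $(\mathsf{U}_t\oplus\mathsf{V}_t)_{t\in\RR}$, observe that all three factors intertwine the orthogonal groups and hence extend to the deformed completions, and then rerun the argument of Theorem \ref{bozejko}. The functional-calculus verification that intertwining forces contractivity for the deformed inner products is exactly the detail the paper leaves implicit, and you supply it correctly.
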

\begin{proof}
We decompose $T=PU_T \iota$ as previously; the exact form of this decomposition is important. We equip the space $\mathcal{K}_{\RR} \oplus \mathcal{H}_{\RR}$ with the orthogonal group $(\mathsf{U}_t \oplus \mathsf{V}_t)_{t \in \RR}$. Note that the completion of $\mathcal{K}_{\CC} \oplus \mathcal{H}_{\CC}$ with respect to the inner product defined by $(\mathsf{U}_t \oplus \mathsf{V}_t)_{t \in \RR}$ is naturally identified with $\mathcal{K} \oplus \mathcal{H}$. Then the three maps $P$, $U_T$, and $\iota$ intertwine the orthogonal groups and, therefore, extend to contractions between appropriate Hilbert spaces. The rest of the proof is exactly the same as previously.
\end{proof}
We would like to state now our extension of the second quantisation (with the same minimal requirements as in \cite[Corollary 3.16]{MR2822210}).
\begin{theor}
Suppose that $T: \mathcal{K} \to \mathcal{H}$ is a contraction such that $T=JTI$, where $I$ is the conjugation on $\mathcal{K}_{\CC}$ and $J$ is the conjugation on $\mathcal{H}_{\CC}$. Then the assignment $W(e_1\otimes\dots\otimes e_n) \mapsto W(Te_1\otimes\dots\otimes Te_n)$ extends to a normal ucp map $\Gamma_q(T): \Gamma_q(\mathcal{K}) \to \Gamma_q(\mathcal{H})$ that preserves the vacuum state. 
\end{theor}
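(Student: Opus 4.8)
We follow the strategy used for the $q$-Gaussian case (Theorem~\ref{bozejko}) and for Hiai's Proposition~\ref{Hiai}, working directly on the deformed complex Hilbert spaces: factor $T$ as a composition of elementary contractions — an isometric inclusion, a surjective isometry, and a coprojection — each compatible with the conjugations, second-quantise the factors separately, and compose. Concretely, I would dilate $T$ to a surjective isometry, $T=P\,U\,\iota$, where $\iota\colon\mathcal{K}\hookrightarrow\mathcal{K}\oplus\mathcal{H}'$ is an inclusion, $P\colon\mathcal{K}\oplus\mathcal{H}'\to\mathcal{H}$ a coprojection (so that $PP^{\ast}=\mathds{1}_{\mathcal{H}}$), and $U$ a surjective isometry of $\mathcal{K}\oplus\mathcal{H}'$ onto itself, with $\mathcal{H}'$ a complex Hilbert space carrying a conjugation $K$ chosen so that $U$ intertwines $I\oplus K$ with itself; the inclusion $\iota$ and the coprojection $P$ intertwine the relevant conjugations by construction ($(I\oplus K)\iota=\iota I$ and $JP=P(I\oplus K)$, the latter after identifying $\mathcal{H}'$ with the appropriate summand).

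Granting this factorisation, the three second-quantised factors are produced exactly as in the proof of Theorem~\ref{bozejko}. For the surjective isometry $U$, $\mathcal{F}_q(U)$ is unitary and $\Gamma_q(U)(x):=\mathcal{F}_q(U)x\mathcal{F}_q(U)^{\ast}$ is a vacuum-preserving normal $\ast$-isomorphism sending $W(e_1\otimes\cdots\otimes e_n)$ to $W(Ue_1\otimes\cdots\otimes Ue_n)$; here one uses $\mathcal{F}_q(U)a_q^{\ast}(v)=a_q^{\ast}(Uv)\mathcal{F}_q(U)$, the relation for $a_q$ obtained from it by taking adjoints, and the fact that $U$ carries real vectors to real vectors (which is what makes $s_q(Uv)$ land back in $\Gamma_q(\mathcal{K}\oplus\mathcal{H}')$). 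For $P$, $\mathcal{F}_q(P)$ is a coisometry — since $\mathcal{F}_q(P)\mathcal{F}_q(P)^{\ast}=\mathcal{F}_q(PP^{\ast})=\mathds{1}$ by functoriality of first quantisation — and $\Gamma_q(P)(x):=\mathcal{F}_q(P)x\mathcal{F}_q(P)^{\ast}$ is a normal ucp map with the analogous effect on Wick words. Finally $\Gamma_q(\iota)$ is constructed verbatim as in Theorem~\ref{bozejko}: the map $x\mapsto\mathcal{F}_q(\iota^{\ast})x\mathcal{F}_q(\iota)$, restricted to the subalgebra $\Gamma_q(\mathcal{K},\mathcal{K}\oplus\mathcal{H}')$ generated by $\{s_q(v):v\in\mathcal{K}_{\RR}\oplus\{0\}\}$, is a $\ast$-homomorphism (those generators, hence the whole subalgebra, preserve $\mathcal{F}_q(\mathcal{K})\subseteq\mathcal{F}_q(\mathcal{K}\oplus\mathcal{H}')$) and is injective (because $\Omega$ is separating), hence an isometric $\ast$-isomorphism onto $\Gamma_q(\mathcal{K})$, and $\Gamma_q(\iota)$ is its inverse. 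Setting $\Gamma_q(T):=\Gamma_q(P)\circ\Gamma_q(U)\circ\Gamma_q(\iota)$, a composition of normal ucp maps, yields a normal ucp map $\Gamma_q(\mathcal{K})\to\Gamma_q(\mathcal{H})$ that preserves the vacuum state (each factor does) and that, on Wick words, sends $W(e_1\otimes\cdots\otimes e_n)\mapsto W(\iota e_1\otimes\cdots)\mapsto W(U\iota e_1\otimes\cdots)\mapsto W(PU\iota e_1\otimes\cdots)=W(Te_1\otimes\cdots\otimes Te_n)$ since $PU\iota=T$.

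The main obstacle is the very first step — producing the conjugation-compatible dilation. In the $q$-Gaussian case there is a single canonical conjugation and the usual $2\times 2$ dilation is automatically orthogonal; in Hiai's setting the hypothesis that $T$ intertwines the one-parameter groups again makes the standard dilation automatically compatible with the conjugations. Under the weaker hypothesis $T=\overline{JTI}$ this is no longer automatic: the corner $(\mathds{1}-TT^{\ast})^{1/2}$ of the naive dilation commutes with $J$ only when $T$ additionally intertwines the groups (because, for deformed adjoints, the conjugation $J$ is unbounded and $J(\mathds{1}-TT^{\ast})J\ne\mathds{1}-TT^{\ast}$ in general). So the auxiliary conjugation $K$ on the defect space must be chosen by hand, and one must verify — with due care for the domains of the unbounded conjugations — both that $U$ then intertwines the conjugations and carries real vectors to real vectors, and that $T$, $U$, $P$ map the dense subspace on which the Wick words $W(\cdot)$ are defined (vectors in the domain of the relevant conjugation) into its counterpart, so that the intermediate Wick words above are meaningful; this too is forced by $T=\overline{JTI}$. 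This bookkeeping with the unbounded conjugations, rather than the essentially formal algebra, is where the real work lies; everything else, including the functoriality $\mathcal{F}_q(S_1S_2)=\mathcal{F}_q(S_1)\mathcal{F}_q(S_2)$ and $\mathcal{F}_q(S)^{\ast}=\mathcal{F}_q(S^{\ast})$ (valid because first quantisation acts literally as $S^{\otimes n}$ on the algebraic tensor algebra, the $q$-deformation affecting only the inner product), goes through as in Theorem~\ref{bozejko}.
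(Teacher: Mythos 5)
Your overall skeleton (dilate, second-quantise the factors, compose) is the classical one, but the step you yourself flag as ``where the real work lies'' --- producing a dilation $T=PU\iota$ in which the middle unitary $U$ commutes with a conjugation $I\oplus K$ chosen by hand on a defect space --- is exactly the step that is missing, and it is not a matter of routine bookkeeping. A unitary of the deformed space that commutes with a conjugation of this type automatically commutes with the associated one-parameter group: writing the closed antilinear involution as $\tilde{J}\tilde{\Delta}^{1/2}$ and using uniqueness of the polar decomposition, $U\tilde{I}U^{\ast}=\tilde{I}$ forces $U\tilde{J}U^{\ast}=\tilde{J}$ and $U\tilde{\Delta}^{it}U^{\ast}=\tilde{\Delta}^{it}$. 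So your proposal amounts to demanding a group-equivariant unitary dilation for a contraction $T$ that satisfies only $JTI=T$ and need not intertwine the groups $(\mathsf{U}_t)$ and $(\mathsf{V}_t)$; this is a substantial claim that you neither prove nor reduce to known results, and the natural candidate fails (as you note, the defect operator $(\mathds{1}-TT^{\ast})^{1/2}$ is not conjugation-compatible). The paper states this obstruction explicitly: for the standard dilation ``there is no hope of defining a map $\Gamma_q(U_T)$'' between the $q$-Araki-Woods algebras. Without an actual construction of $(\mathcal{H}',K,U,P,\iota)$, the argument does not get off the ground, and it is not at all clear such a construction exists in general.

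The paper's proof resolves this by a different idea, which your proposal lacks: it keeps the naive dilation $U_T$ on $\mathcal{K}\oplus\mathcal{H}$ and gives up the requirement that the intermediate maps be maps between the von Neumann algebras. Only $\Gamma_q(\iota)$ is a genuine second quantisation (available by Proposition \ref{Hiai}, since $\iota$ and $P$ do not depend on $T$ and are equivariant); the unitary and the projection are handled by the conjugations $x\mapsto\mathcal{F}_q(U_T)x\mathcal{F}_q(U_T)^{\ast}$ and $x\mapsto\mathcal{F}_q(P)x\mathcal{F}_q(P)^{\ast}$, viewed merely as normal ucp maps on $B(\mathcal{F}_q(\mathcal{K}\oplus\mathcal{H}))$, respectively into $B(\mathcal{F}_q(\mathcal{H}))$. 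No intermediate Wick words are ever needed: one tracks products of creation and annihilation operators through the composite $v=\mathcal{T}_q(P)\circ\mathcal{T}_q(U_T)\circ\Gamma_q(\iota)$ using the Wick formula (Lemma \ref{wick}), and only at the very end does the hypothesis $JTI=T$, applied to the full composite $T=PU_T\iota$, identify $v(W(e_1\otimes\dots\otimes e_n))$ with $W(Te_1\otimes\dots\otimes Te_n)$, so that the range of $v$ lies in $\Gamma_q(\mathcal{H})$ and one may set $\Gamma_q(T):=v$. Either adopt this relaxation (or the alternative Toeplitz-algebra route of Theorem \ref{kernel}, in the spirit of \cite{MR2822210}), or supply a proof that your conjugation-compatible dilation exists; as written, the proposal has a genuine gap at its central step.
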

\begin{proof}
We start similarly as in the proof of Theorem \ref{bozejko}; dilate $T$ to a unitary $U_T$ on $\mathcal{K} \oplus \mathcal{H}$ given by $\left[ \begin{array}{cc} (\mathds{1}_{\mathcal{K}} - T^{\ast}T)^{\frac{1}{2}} & T^{\ast}  \\ T  & -(\mathds{1}_{\mathcal{H}} - TT^{\ast})^{\frac{1}{2}} \end{array} \right]$ so that $T = P U_T \iota$, where $\iota: \mathcal{K} \to \mathcal{K} \oplus \mathcal{H}$ is the natural inclusion and $P: \mathcal{K} \oplus \mathcal{H} \to \mathcal{H}$ is the orthogonal projection. Note that only $U_T$ depends on $T$, so it is easy to see that $\iota$ and $P$ come from maps of real Hilbert spaces $\mathcal{K}_{\RR}$, $\mathcal{K}_{\RR} \oplus \mathcal{H}_{\RR}$, and $\mathcal{H}_{\RR}$ and they intertwine the orthogonal groups $(\mathsf{U}_t)_{t \in \RR}$, $(\mathsf{U}_t \oplus \mathsf{V}_t)_{t \in \RR}$, and $(\mathsf{V}_t)_{t \in \RR}$. Therefore there is no problem with defining the second quantisation for these maps (Proposition \ref{Hiai}). We get a ucp map $\Gamma_q(\iota): \Gamma_q(\mathcal{K}) \to \Gamma_q(\mathcal{K} \oplus \mathcal{H})$. The condition $JTI=T$ is not self-adjoint, hence in general $U_T$ does not commute with $I\oplus J$, so there is no hope of defining a map $\Gamma_q(U_T): \Gamma_q(\mathcal{K} \oplus \mathcal{H}) \to \Gamma_q(\mathcal{K} \oplus \mathcal{H})$. However, there is a map $\mathcal{T}_q(U_T): \op{B}(\mathcal{F}_q(\mathcal{K} \oplus \mathcal{H})) \to \op{B}(\mathcal{F}_q(\mathcal{K} \oplus \mathcal{H}))$ given by conjugation $x \mapsto \mathcal{F}_q(U) x \mathcal{F}_q(U_T)^{\ast}$. One can easily check that $\mathcal{T}_q(U_T) \left(a_q^{\ast}(e_1)\dots a_q^{\ast}(e_k)a_q(e_{k+1})\dots a_q(e_n)\right) = a_q^{\ast}(U_T e_1)\dots a_q^{\ast}(U_T e_k)a_q(U_T e_{k+1})\dots a_q(U_T e_n)$. So far, we have a normal ucp map $\mathcal{T}_q(U_T)\circ \Gamma_q(\iota): \Gamma_q(\mathcal{K}) \to B(\mathcal{F}_q(\mathcal{K} \oplus \mathcal{H}))$. We now have to deal with the projection $P$. As in the case of a unitary operator, we get a map $\mathcal{T}_q(P): B(\mathcal{F}_q(\mathcal{K} \oplus \mathcal{H})) \to B(\mathcal{F}_q(\mathcal{H}))$ given by $x \mapsto \mathcal{F}_q(P)x \mathcal{F}_q(P)^{\ast}$. It is a simple matter to check that in this case we still have 
\[\mathcal{T}_q(P) \left(a_q^{\ast}(e_1)\dots a_q^{\ast}(e_k)a_q(e_{k+1})\dots a_q(e_n)\right) = a_q^{\ast}(Pe_1)\dots a_q^{\ast}(Pe_k) a_q(Pe_{k+1})\dots a_q(Pe_n).
\]
Finally, we obtain a (normal) ucp map 
$$
v:=\mathcal{T}_q(P)\circ \mathcal{T}_q(U_T) \circ \Gamma_q(\iota): \Gamma_q(\mathcal{K}) \to B(\mathcal{F}_q(\mathcal{H}))
$$
that has the property that $v(W(e_1\otimes\dots\otimes e_n))$ is equal to  
\[\sum_{k=0}^{n} \sum_{i_1,\dots,i_k,j_{k+1},\dots,j_n} a_q^{\ast}(PU_T\iota e_{i_1})\dots a_q^{\ast}(PU_T\iota e_{i_k})a_q(PU_T\iota I e_{j_{k+1}})\dots a_q(PU_T\iota I e_{j_n}) q^{i(I_1,I_2)}.
\]
Since $T=PU_T\iota$ satisfies $JTI=T$ this is equal to $W(Te_1\otimes\dots\otimes Te_n)$, therefore the image of $v$ is contained in $\Gamma_q(\mathcal{H})$ and we define $\Gamma_q(T):= v$.
\end{proof}
\subsection{Toeplitz algebra}
In this section we would like to present a second  approach to the extended second quantisation. Let us start with a definition.
\begin{defin}
Let $\mathcal{H}$ be a (complex) Hilbert space. Let $\mathcal{F}_q(\mathcal{H})$ be the $q$-Fock space over $\mathcal{H}$. We define the \textbf{$q$-Toeplitz algebra} $\mathcal{T}_q(\mathcal{H})$ to be the $C^{\ast}$-algebra generated by the creation operators $a_q^{\ast}(v)$ inside $B(\mathcal{F}_q(\mathcal{H}))$. If $\mathcal{K} \subset \mathcal{H}$ is a closed subspace, we define $\mathcal{T}_q(\mathcal{K}, \mathcal{H})$ to be the $C^{\ast}$-subalgebra of $\mathcal{T}_q(\mathcal{H})$ generated by the set $\{a_q^{\ast}(v): v \in \mathcal{K}\}$.
\end{defin}
\begin{rem}
Note that the algebra $\mathcal{T}_q(\mathcal{K}, \mathcal{H})$ leaves the subspace $\mathcal{F}_q(\mathcal{K}) \subset \mathcal{F}_{q}(\mathcal{H})$ globally invariant.
\end{rem}
We would like to note that both $\Gamma_q(P)$ and $\Gamma_q(U_T)$ (denoted then by $\mathcal{T}_q(P)$ and $\mathcal{T}_q(U_T)$) can be defined on the level of the algebra $\mathcal{T}_q(\mathcal{H})$ by the same formula. If we could do that also for $\Gamma_q(\iota)$, we would be able to obtain a second quantisation procedure on the level of the $q$-Toeplitz algebra. The reasons for seeking such a generalisation are twofold. First, it is interesting in its own right because better understanding of the structure of the $q$-Toeplitz algebra has potential applications to the study of radial multipliers (cf. \cite{MR2822210} for the free case). Second, it allows us to use the approach of Houdayer and Ricard (cf. \cite[Theorem 3.15 and Corollary 3.16]{MR2822210}) to extend the second quantisation for the $q$-Araki-Woods algebras. Let us point out what obstacle has to be overcome. To show that we can define $\mathcal{T}_q(\iota)$, we would like to show that the $\ast$-homomorphism $\mathcal{T}_q(\iota^{\ast}): \mathcal{T}_q(\mathcal{K}, \mathcal{K} \oplus \mathcal{H}) \to \mathcal{T}_q(\mathcal{K})$ is injective. This is the hard part, because now the vacuum vector $\Omega$ is not separating anymore. The kernel $\op{ker}(\mathcal{T}_q(\iota^{\ast}))$ is formed by elements vanishing on the subspace $\mathcal{F}_q(\mathcal{K}) \subset \mathcal{F}_q(\mathcal{K} \oplus \mathcal{H})$. We will now state the triviality of the kernel explicitly.
\begin{theor}\label{kernel}
Let $\mathcal{K}$ and $\mathcal{H}$ complex Hilbert spaces, with inclusion $\iota: \mathcal{K} \to \mathcal{H}$. Then the $\ast$-homomorphism $\mathcal{T}_q(\iota^{\ast}): \mathcal{T}_q(\mathcal{K}, \mathcal{H}) \to \mathcal{T}_q(\mathcal{K})$ is injective.
\end{theor}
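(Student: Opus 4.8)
The goal is to show that the $*$-homomorphism $\mathcal{T}_q(\iota^{\ast})\colon \mathcal{T}_q(\mathcal{K},\mathcal{H}) \to \mathcal{T}_q(\mathcal{K})$ is injective. The map $\mathcal{T}_q(\iota^{\ast})$ sends $a_q^{\ast}(v)$ (for $v \in \mathcal{K}$, viewed inside $\mathcal{H}$) to $a_q^{\ast}(v)$ (in $\mathcal{T}_q(\mathcal{K})$), because $\iota^{\ast}v = v$ for $v \in \mathcal{K}$; in terms of the conjugation-by-$\mathcal{F}_q(\iota^{\ast})$ description, $\mathcal{F}_q(\iota\iota^{\ast})$ is the orthogonal projection of $\mathcal{F}_q(\mathcal{H})$ onto $\mathcal{F}_q(\mathcal{K})$, and the kernel consists exactly of those $x \in \mathcal{T}_q(\mathcal{K},\mathcal{H})$ that vanish on $\mathcal{F}_q(\mathcal{K})$. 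So the plan is to prove: if $x \in \mathcal{T}_q(\mathcal{K},\mathcal{H})$ and $x|_{\mathcal{F}_q(\mathcal{K})} = 0$, then $x = 0$.

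First I would reduce to the finite-dimensional (or even one-step) situation. Write $\mathcal{H} = \mathcal{K} \oplus \mathcal{L}$ with $\mathcal{L} = \mathcal{K}^{\perp}$. Using that creation operators on orthogonal summands interact only through the commutation relations, I want to express $\mathcal{F}_q(\mathcal{H})$ and the action of $\mathcal{T}_q(\mathcal{K},\mathcal{H})$ in a way that isolates the ``$\mathcal{L}$-directions.'' Concretely, pick an orthonormal family in $\mathcal{K}$ (finitely many generators at a time, by a density/approximation argument) and one extra unit vector $f \in \mathcal{L}$; it suffices to handle $\mathcal{H}' = \mathcal{K}' \oplus \mathbb{C}f$ for finite-dimensional $\mathcal{K}' \subset \mathcal{K}$ and then pass to the limit, since $\mathcal{T}_q(\mathcal{K},\mathcal{H})$ is the closure of the union of such subalgebras and injectivity of a $*$-homomorphism is detected on a dense $*$-subalgebra (a $*$-homomorphism of $C^*$-algebras is injective iff it is isometric, iff it is isometric on a dense $*$-subalgebra). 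The key structural input I would invoke is the explicit form of the inner product on $\mathcal{F}_q(\mathcal{H})$ via the symmetrisation operator $P_q$ (the $\sum_{\sigma} q^{\mathrm{inv}(\sigma)}$ operator on tensors), together with its strict positivity for $|q| < 1$; this is what makes the $q$-Fock construction faithful and is implicitly behind Lemma~\ref{wick}.

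The heart of the argument is a separation/faithfulness statement: the vacuum vector $\Omega$, while not separating for $\mathcal{T}_q(\mathcal{K},\mathcal{H})$, becomes separating once we enlarge the supply of test vectors to all of $\mathcal{F}_q(\mathcal{K})$. I would argue as follows. Suppose $x \in \mathcal{T}_q(\mathcal{K}',\mathcal{H}')$ kills $\mathcal{F}_q(\mathcal{K}')$. Approximate $x$ by a $*$-polynomial $p$ in the $a_q^{\ast}(v)$, $v \in \mathcal{K}'$; by the Wick formula \eqref{Wickformula}, modulo a controlled error, $p$ lies in the linear span of $\{W(\xi) : \xi \in \mathcal{F}_q(\mathcal{K}')\}$, i.e.\ $p$ is a Wick word with data in $\mathcal{F}_q(\mathcal{K}')$, plus annihilation-heavy tails. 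Then $p\,\Omega \in \mathcal{F}_q(\mathcal{K}')$, and the condition $x|_{\mathcal{F}_q(\mathcal{K}')} = 0$ forces $p\,\Omega$ to be small; more importantly, applying $p$ to higher-level vectors $v_{i_1} \otimes \cdots \otimes v_{i_m} \in \mathcal{F}_q(\mathcal{K}')$ and using the commutation relations between creators and annihilators (with the $q^{\mathrm{inv}}$ weights) lets me recover all the ``coordinates'' of $p$ in the Wick expansion. Equivalently: the map that sends an operator in $\mathcal{T}_q(\mathcal{K}',\mathcal{H}')$ to its restriction to $\mathcal{F}_q(\mathcal{K}')$ is, on the relevant dense subspace, a complete isometry onto its image inside $B(\mathcal{F}_q(\mathcal{K}'))$, because it coincides (after identifying $\mathcal{F}_q(\mathcal{K}',\mathcal{H}') \cong \mathcal{F}_q(\mathcal{K}')$-module structure) with the natural representation generating $\mathcal{T}_q(\mathcal{K}')$. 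Hence $x|_{\mathcal{F}_q(\mathcal{K}')} = 0$ implies $\|x\| = 0$, as desired.

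The main obstacle will be exactly this last point: proving that vanishing on $\mathcal{F}_q(\mathcal{K})$ genuinely forces the operator to be zero, when $\mathcal{H}$ strictly contains $\mathcal{K}$. The subtlety is that an element of $\mathcal{T}_q(\mathcal{K},\mathcal{H})$ is not merely a polynomial — it is a norm limit — and its action on $\mathcal{F}_q(\mathcal{H}) \ominus \mathcal{F}_q(\mathcal{K})$ is a priori uncontrolled. To pin it down I expect to need a quantitative version of the Wick formula: an estimate showing that the ``$\mathcal{F}_q(\mathcal{K})$-to-$\mathcal{F}_q(\mathcal{K})$ corner'' of an element of $\mathcal{T}_q(\mathcal{K},\mathcal{H})$ already determines its full norm, i.e.\ a conditional-expectation-type or norm-domination bound of the form $\|x\|_{B(\mathcal{F}_q(\mathcal{H}))} \leqslant C\,\|x|_{\mathcal{F}_q(\mathcal{K})}\|_{B(\mathcal{F}_q(\mathcal{K}))}$ with $C$ independent of $x$ (one would hope $C = 1$). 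Establishing such a bound — plausibly via the positivity of $P_q$ and a careful bookkeeping of which tensor components a Wick word built from $\mathcal{K}$-vectors can reach inside $\mathcal{F}_q(\mathcal{H})$ — is where the real work lies; the rest is approximation and the standard fact that injective $*$-homomorphisms of $C^*$-algebras are precisely the isometric ones.
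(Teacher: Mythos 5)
Your plan correctly reduces the theorem to the statement that an element of $\mathcal{T}_q(\mathcal{K},\mathcal{H})$ vanishing on $\mathcal{F}_q(\mathcal{K})$ must be zero, and you correctly locate the difficulty in a norm-domination bound of the form $\|x\|\leqslant C\,\|x|_{\mathcal{F}_q(\mathcal{K})}\|$. But such a bound, valid for all $x$ in the $C^{\ast}$-algebra, is equivalent to the theorem itself (a $\ast$-homomorphism with trivial kernel is automatically isometric), so announcing it as the needed ingredient and then conceding that establishing it ``is where the real work lies'' leaves the entire proof undone. The intermediate claims offered in its place do not close the gap: injectivity of the map sending formal Wick data to operators on the dense $\ast$-subalgebra of polynomials (the content of Lemma \ref{L:finkernel}) only rules out the obvious candidates for kernel elements, and the assertion that restriction to $\mathcal{F}_q(\mathcal{K})$ is a (complete) isometry on that dense subalgebra is again a reformulation of what must be proved, with no mechanism supplied for controlling the action of a norm limit on $\mathcal{F}_q(\mathcal{H})\ominus\mathcal{F}_q(\mathcal{K})$.

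The two ideas that actually carry the argument are absent from your plan. First, one cannot work with arbitrary elements of the kernel: the paper averages over the gauge action $x\mapsto \mathcal{F}_q(e^{it})x\mathcal{F}_q(e^{-it})$, whose fixed-point algebra is the closed span of the balanced words $a_q^{\ast}(\mathbf{v}_n)a_q(\overline{\mathbf{w}}_n)$ and onto which the averaging is a \emph{faithful} conditional expectation preserving the kernel; this reduces the problem to elements respecting the grading of the Fock space. Second, for such elements one needs a quantitative estimate genuinely weaker than the theorem, hence provable directly: Lemma \ref{norm}, namely $\|y\|\leqslant C(q)\|P_n y P_n\|$ for $y$ of pure length $n$, obtained from the factorisation $P_{n+k}yP_{n+k}=\op{Id}_{n,k}(P_nyP_n\otimes\op{Id}_k)R^{\ast}_{n+k,k}$ together with the uniform bound $\|R^{\ast}_{n+k,k}\|\leqslant C(q)$ from \cite{MR2091676}. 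These are combined by an induction on the length, using that a length-$n$ word built from $\mathcal{K}$-vectors annihilates every degree-$n$ tensor containing a factor from $\mathcal{K}^{\perp}$. Finally, be wary of your finite-dimensional reduction: deciding whether the norm of a polynomial in $\mathcal{K}'$-creation operators computed on $\mathcal{F}_q(\mathcal{H})$ coincides with its norm on $\mathcal{F}_q(\mathcal{K}'\oplus\CC f)$ is itself an instance of the theorem, so that step cannot be taken for free.
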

To make the theorem look plausible, we would like to state a lemma saying that the linear span of the products of generators of the $q$-Toeplitz algebra, a dense $\ast$-subalgebra of it, does not contain any nontrivial element of the kernel -- this shows that there are no obvious candidates for the elements of the kernel. Before that, let us introduce some useful notation.
\begin{defin}
Let $\mathcal{H}$ be a complex Hilbert space and let $\overline{\mathcal{H}}$ be its complex conjugate space. We define maps $a_q^{\ast}: \mathcal{H}^{\otimes k} \to B(\mathcal{F}_q(\mathcal{H}))$ and $a_q: \overline{\mathcal{H}}^{\otimes k} \to B(\mathcal{F}_q(\mathcal{H}))$ (the tensor products are simply algebraic tensor products) to be the linear extensions of the maps given on simple tensors by $a_q^{\ast}(e_1\otimes\dots \otimes e_k) = a_q^{\ast}(e_1)\dots a_q^{\ast}(e_k)$ and $a_q(\overline{e}_1\otimes\dots\otimes \overline{e}_k)=a_q(e_1)\dots a_q(e_k)$. For any $\mathbf{v}_k\otimes \overline{\mathbf{w}}_{n-k} \in \mathcal{H}^{\otimes k} \otimes \overline{\mathcal{H}}^{\otimes (n-k)}$ we also define $A_{k,n}(\mathbf{v}_k \otimes \overline{\mathbf{w}}_{n-k}):= a_q^{\ast}(\mathbf{v}_k) a_q(\overline{\mathbf{w}}_{n-k})$. Let us also define the space 
\[
\mathsf{T}(\mathcal{H}):= \bigoplus_{n=0}^{\infty} \left(\bigoplus_{k=0}^{n} \mathcal{H}^{\otimes k} \otimes \overline{\mathcal{H}}^{\otimes n-k}\right),
\]
where the direct sums and tensor products are algebraic. Direct sum of all the operators $A_{k,n}$ will be denoted by $A: \mathsf{T}(\mathcal{H}) \to \mathcal{T}_q(\mathcal{H})$. Note that if $\iota:\mathcal{K} \to \mathcal{H}$ is an inclusion of Hilbert spaces, then $A$ can be equally well viewed as a map from $\mathsf{T}(\mathcal{K})$ to $\mathcal{T}_q(\mathcal{K}, \mathcal{H})$, for which we will use the same notation. 
\end{defin}
Using basic algebraic manipulations, we can obtain the following lemma, whose proof will be omitted.
\begin{lem}\label{L:finkernel}
Let $\iota: \mathcal{K} \to \mathcal{H}$ be an inclusion of Hilbert spaces. Then the mapping $A: \mathsf{T}(\mathcal{K}) \to \mathcal{T}_q(\mathcal{K}, \mathcal{H})$ is injective. As a consequence,
any $x$ in the range of $A$ is not in the kernel of the map $\mathcal{T}_q(\iota^{\ast}): \mathcal{T}_q(\mathcal{K}, \mathcal{H}) \to \mathcal{T}_q(\mathcal{K})$. 
\end{lem}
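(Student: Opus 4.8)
The plan is to prove injectivity of $A$ by evaluating the operator $A(\xi)$ on explicit simple tensors in $\mathcal{F}_q(\mathcal{K})$ and reading off the homogeneous components of $\xi$ one particle-degree at a time; the claimed consequence then follows by compressing to $\mathcal{F}_q(\mathcal{K})$. First I would exploit the grading by particle number. Each monomial $A_{k,n}(\mathbf{v}_k \otimes \overline{\mathbf{w}}_{n-k}) = a_q^{\ast}(v_1)\cdots a_q^{\ast}(v_k) a_q(w_1)\cdots a_q(w_{n-k})$ creates $k$ and annihilates $n-k$ particles, hence maps the $m$-particle subspace $\mathcal{K}^{\otimes m} \subset \mathcal{F}_q(\mathcal{K})$ into $\mathcal{K}^{\otimes(m+2k-n)}$. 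Grouping the finitely many nonzero components of $\xi$ according to the value $d := 2k-n$, one writes $A(\xi) = \sum_d A^{(d)}$, where $A^{(d)}$ raises the particle number by exactly $d$. Since operators with distinct shifts $d$ have ranges in mutually orthogonal graded subspaces, $A(\xi)=0$ forces $A^{(d)}=0$ for every $d$, which reduces the problem to a single value of $d$.

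Within a fixed $d$, I would index the surviving terms by the number $l \geq \max(0,-d)$ of annihilation operators, so that such a term has $k=d+l$ creations and $n=d+2l$, with coefficient $\xi_l \in \mathcal{K}^{\otimes(d+l)} \otimes \overline{\mathcal{K}}^{\otimes l}$. These I would peel off by induction on $l$: assuming $\xi_{l'}=0$ for all $l'<l_0$, apply $A^{(d)}$ to an arbitrary simple tensor $\mathbf{y}=y_1\otimes\cdots\otimes y_{l_0}\in\mathcal{K}^{\otimes l_0}$. A term with $l>l_0$ applies more than $l_0$ annihilation operators to a degree-$l_0$ vector and hence vanishes (each $a_q(w)$ lowers the degree by one and kills $\Omega$), while the terms with $l<l_0$ vanish by hypothesis, leaving only the $l_0$-term. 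Writing $\xi_{l_0}=\sum_\alpha \mathbf{v}^\alpha\otimes\overline{\mathbf{w}^\alpha}$ and using $a_q(w)\Omega=0$, $a_q^{\ast}(\mathbf{v})\Omega=\mathbf{v}$, and $a_q(w)^{\ast}=a_q^{\ast}(w)$, one computes $A^{(d)}(\mathbf{y}) = \sum_\alpha \langle \mathbf{y}, \widetilde{\mathbf{w}^\alpha}\rangle_q\, \mathbf{v}^\alpha \in \mathcal{K}^{\otimes(d+l_0)}$, where $\langle\cdot,\cdot\rangle_q$ is the inner product of $\mathcal{F}_q(\mathcal{K})$ on the $l_0$-particle sector and $\widetilde{\mathbf{w}}$ denotes the reversed tensor.

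The crux — and the step I expect to be the main obstacle — is to conclude $\xi_{l_0}=0$ from the vanishing of this map for all $\mathbf{y}$. Here I would invoke the fact that for $-1<q<1$ the $q$-deformed inner product is strictly positive on every tensor power, hence non-degenerate; consequently the linear map $\overline{\mathbf{w}}\mapsto\langle\cdot,\widetilde{\mathbf{w}}\rangle_q$ from $\overline{\mathcal{K}}^{\otimes l_0}$ into $(\mathcal{K}^{\otimes l_0})^{\ast}$ is injective. Tensoring with the identity on $\mathcal{K}^{\otimes(d+l_0)}$ shows that the assignment $\mathbf{y}\mapsto A^{(d)}(\mathbf{y})$ determines $\xi_{l_0}$, so its vanishing gives $\xi_{l_0}=0$. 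This closes the induction, proves that $A^{(d)}=0$ implies all $\xi_l=0$, and hence that $A$ is injective; the same argument with $\mathcal{H}$ in place of $\mathcal{K}$ gives injectivity of $A:\mathsf{T}(\mathcal{H})\to\mathcal{T}_q(\mathcal{H})$ as well. Without the non-degeneracy of $\langle\cdot,\cdot\rangle_q$ the final deduction would fail (as it does at $q=\pm1$), so this positivity is the essential input.

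Finally, for the consequence, let $x=A(\xi)$ with $\xi\neq 0$. By the remark preceding the lemma, $x$ preserves $\mathcal{F}_q(\mathcal{K})$, and its compression $\mathcal{T}_q(\iota^{\ast})(x)=\mathcal{F}_q(\iota^{\ast})\,x\,\mathcal{F}_q(\iota)$ is obtained by restricting each generator $a_q^{\ast}(v),a_q(v)$ with $v\in\mathcal{K}$ to $\mathcal{F}_q(\mathcal{K})$, where it acts as the corresponding Toeplitz generator over $\mathcal{K}$. Thus $\mathcal{T}_q(\iota^{\ast})(A(\xi))$ equals the operator $A(\xi)$ computed intrinsically on $\mathcal{F}_q(\mathcal{K})$, which is nonzero by the injectivity just proved. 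Hence no nonzero element of the range of $A$ lies in $\ker(\mathcal{T}_q(\iota^{\ast}))$.
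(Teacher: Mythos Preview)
Your argument is correct. The paper itself omits the proof of this lemma entirely, stating only that it follows from ``basic algebraic manipulations''; your write-up supplies exactly such a proof, and the steps you isolate---splitting by the particle-number shift $d=2k-n$, peeling off the remaining terms by induction on the number of annihilation operators via evaluation on $\mathcal{K}^{\otimes l_0}$, and invoking the strict positivity of the $q$-inner product for $|q|<1$ to pass from the vanishing of $\mathbf{y}\mapsto\sum_\alpha\langle\mathbf{y},\widetilde{\mathbf{w}^\alpha}\rangle_q\,\mathbf{v}^\alpha$ to $\xi_{l_0}=0$---are precisely the manipulations one expects. Your deduction of the consequence is also clean: since the generators $a_q^{\ast}(v),a_q(v)$ with $v\in\mathcal{K}$ preserve $\mathcal{F}_q(\mathcal{K})$, the compression $\mathcal{T}_q(\iota^{\ast})(A(\xi))$ is literally the restriction of $A(\xi)$ to $\mathcal{F}_q(\mathcal{K})$, and your injectivity argument already only used test vectors from $\mathcal{F}_q(\mathcal{K})$, so nothing further is needed.
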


Before proving Theorem \ref{kernel}, we need just one more lemma, which we precede with introduction of convenient notation.

Elements of the form $a_q^{\ast}(\mathbf{v}_n) a_q(\overline{\mathbf{w}}_n)$, where $\mathbf{v}_n, \mathbf{w}_n \in \mathcal{H}^{\otimes n}$ will be called elements of \textbf{length $n$}, and their \emph{non-closed} linear span will be denoted by $\left(\mathcal{T}_q(\mathcal{H})\right)_{n}$. Note that, by Lemma \ref{L:finkernel}, the subspaces $\left(\mathcal{T}_q(\mathcal{H})\right)_{n}$ are linearly independent for different $n$, therefore the notion of length is well defined. We will also find it useful to specify the notation for the orthogonal projections $P_n: \mathcal{F}_q(\mathcal{H}) \to \mathcal{H}_q^{\otimes n}$, where $\mathcal{H}_q^{\otimes n}$ denotes the $n$-fold tensor power of $\mathcal{H}$ endowed with a $q$-deformed inner product. Let us also introduce the maps $R_{n+k,k}^{\ast}: \mathcal{H}_q^{\otimes (n+k)} \to \mathcal{H}_q^{\otimes n} \otimes \mathcal{H}_q^{\otimes k}$ (cf. \cite[Lemma 2]{MR2091676}) by their action on simple tensors:
$$
R_{n+k,k}^{\ast}(v_{1}\otimes\dots\otimes v_{n+k}) = \sum_{|I_1|=n, |I_2|=k}q^{i(I_1,I_2)} v_{i_{1}}\otimes\dots\otimes v_{i_n} \otimes v_{j_{n+1}}\otimes \dots \otimes v_{j_{n+k}}, 
$$
with the same notation as in Lemma \ref{wick}.

\begin{lem}\label{norm}
Suppose that $x \in \left(\mathcal{T}_q(\mathcal{H})\right)_{n}$. Then:
\begin{enumerate}[{\normalfont (i)}]
\item\label{one} $P_{n+k}x P_{n+k} = \op{Id}_{n,k}(P_n x P_n \otimes \op{Id}_k) R_{n+k,k}^{\ast}$, where $\op{Id}_{n,k}: \mathcal{H}_q^{\otimes n} \otimes \mathcal{H}_q^{\otimes k} \to \mathcal{H}_q^{\otimes n+k}$ is the extension of the identity map $\mathcal{H}^{\otimes n} \otimes \mathcal{H}^{\otimes k} \to \mathcal{H}^{\otimes n+k}$, defined on algebraic tensor products.
\item\label{two} $\|x\| \leqslant C(q) \|P_n x P_n\|$, where $C(q)$ is a positive constant depending only on $q$. Consequently, $\|x\| \simeq \|P_n x P_n\|$.
\end{enumerate}
\end{lem}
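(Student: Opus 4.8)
The plan is to establish (i) by a direct factorisation of $x$ as an operator on $\mathcal{F}_q(\mathcal{H})$, and then to deduce (ii) from (i) together with the classical uniform estimates for the $q$-symmetrisation operators.

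For (i), the first point is that an element $x$ of length $n$ has net degree $0$, so it preserves each homogeneous component $\mathcal{H}_q^{\otimes m}$ and annihilates $\mathcal{H}_q^{\otimes m}$ for $m<n$; hence $P_{n+k}xP_{n+k}$ is simply the restriction of $x$ to $\mathcal{H}_q^{\otimes(n+k)}$. By linearity it suffices to treat $x=a_q^{\ast}(\mathbf{v}_n)a_q(\overline{\mathbf{w}}_n)$. Now the restriction of $a_q^{\ast}(\mathbf{v}_n)$ to a map $\mathcal{H}_q^{\otimes k}\to\mathcal{H}_q^{\otimes(n+k)}$ is exactly $\eta\mapsto\op{Id}_{n,k}(\mathbf{v}_n\otimes\eta)$, that is $\op{Id}_{n,k}\circ(\lambda\otimes\op{Id}_k)$ where $\lambda\colon\CC\to\mathcal{H}_q^{\otimes n}$ sends $1$ to $\mathbf{v}_n$; dually, the restriction of $a_q(\overline{\mathbf{w}}_n)$ to $\mathcal{H}_q^{\otimes(n+k)}\to\mathcal{H}_q^{\otimes k}$ is the adjoint of the corresponding creation map, and invoking the identity $\op{Id}_{n,k}^{\ast}=R_{n+k,k}^{\ast}$ (\cite[Lemma~2]{MR2091676}) this adjoint equals $(\mu\otimes\op{Id}_k)\circ R_{n+k,k}^{\ast}$ for a suitable functional $\mu\colon\mathcal{H}_q^{\otimes n}\to\CC$. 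Composing the two, the restriction of $x$ to $\mathcal{H}_q^{\otimes(n+k)}$ becomes $\op{Id}_{n,k}\bigl((\lambda\mu)\otimes\op{Id}_k\bigr)R_{n+k,k}^{\ast}$, where $\lambda\mu$ is a rank-one operator on $\mathcal{H}_q^{\otimes n}$; specialising this to $k=0$ (where $\op{Id}_{n,0}$ and $R_{n,0}^{\ast}$ are the identity) identifies $\lambda\mu$ with $P_nxP_n$, and this is exactly the asserted formula. (If the identity $\op{Id}_{n,k}^{\ast}=R_{n+k,k}^{\ast}$ needs reproving, it follows from the decomposition $i(\sigma)=i(I_1,I_2)+i(\sigma|_{I_1})+i(\sigma|_{I_2})$ of the crossing number.)

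For (ii), since $x$ is block-diagonal along $\mathcal{F}_q(\mathcal{H})=\bigoplus_m\mathcal{H}_q^{\otimes m}$ we have $\|x\|=\sup_{k\geqslant0}\|P_{n+k}xP_{n+k}\|$. From (i) and submultiplicativity of the operator norm,
\[
\|P_{n+k}xP_{n+k}\|\leqslant\|\op{Id}_{n,k}\|\cdot\|P_nxP_n\otimes\op{Id}_k\|\cdot\|R_{n+k,k}^{\ast}\|.
\]
Here $\|P_nxP_n\otimes\op{Id}_k\|=\|P_nxP_n\|$, since tensoring a Hilbert-space operator with an identity leaves the norm unchanged, and $\|\op{Id}_{n,k}\|=\|R_{n+k,k}^{\ast}\|$, since the two maps are mutually adjoint. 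Thus all of (ii) reduces to a single estimate: $\sup_{n,k}\|R_{n+k,k}^{\ast}\|^{2}\leqslant C(q)$ with $C(q)$ depending only on $q$. Granting it, $\|x\|\leqslant C(q)\|P_nxP_n\|$, while $\|P_nxP_n\|\leqslant\|x\|$ is immediate, which gives the equivalence.

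The hard part, and the only place where the hypothesis $|q|<1$ is genuinely used, is this uniform bound on $\|R_{n+k,k}^{\ast}\|$. Writing the $q$-inner product on $\mathcal{H}^{\otimes m}$ as $\langle P^{(m)}\,\cdot,\cdot\rangle_0$, where $P^{(m)}=\sum_{\sigma\in S_m}q^{i(\sigma)}\pi_\sigma$ is the Bo\.{z}ejko--Speicher $q$-symmetrisation operator and $\langle\cdot,\cdot\rangle_0$ is the free inner product, the bound is equivalent to the operator inequality $P^{(n+k)}\leqslant C(q)\,(P^{(n)}\otimes P^{(k)})$. I would quote this from the Bo\.{z}ejko--Speicher theory of the operators $P^{(m)}$ (it is used in precisely this form by Nou and by Houdayer--Ricard; see \cite{MR1463036} and \cite[Lemma~2]{MR2091676}): iterating the Bo\.{z}ejko--Speicher recursion one writes $P^{(n+k)}=(P^{(n)}\otimes P^{(k)})\,S_{n,k}$ with $\sup_{n,k}\|S_{n,k}\|\leqslant\prod_{j=1}^{\infty}(1-|q|^{j})^{-1}<\infty$, so one may take $C(q)=\prod_{j=1}^{\infty}(1-|q|^{j})^{-1}$. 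That a $q$-dependent constant of roughly this size is unavoidable is already visible on the vectors $v^{\otimes(n+k)}$ for a unit vector $v$, for which the relevant ratio is the Gaussian binomial coefficient $\binom{n+k}{k}_{|q|}=\prod_{j=1}^{k}\tfrac{1-|q|^{n+j}}{1-|q|^{j}}$, bounded above (uniformly in $n,k$) by $\prod_{j\geqslant1}(1-|q|^{j})^{-1}$ but strictly larger than $1$ as soon as $q\neq0$. Everything else --- part (i) and the reductions in (ii) --- is bookkeeping.
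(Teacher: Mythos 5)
Your proof is correct, and part (ii) follows the paper's argument essentially verbatim: reduce to $\sup_k\|P_{n+k}xP_{n+k}\|$, split off $\|\op{Id}_{n,k}\|\cdot\|R_{n+k,k}^{\ast}\|$ via (i), and control both factors by the operator inequality $P_q^{(n+k)}\leqslant C(q)\,(P_q^{(n)}\otimes P_q^{(k)})$ with $C(q)=\prod_{j\geqslant 1}(1-|q|^j)^{-1}$; the paper obtains that inequality from the factorisation $P_q^{(n+k)}=(P_q^{(n)}\otimes P_q^{(k)})R^{\ast}_{n+k,k}$ by a small auxiliary lemma ($A=BT$ with $A,B\geqslant 0$ implies $A\leqslant\|T\|B$, via operator monotonicity of the square root), whereas you quote the inequality from the literature --- a legitimate citation, but note that the passage from the factorisation to the positivity statement is a genuine step, not a formality. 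Where you genuinely diverge is part (i). The paper computes: it conjugates $q$-annihilators into free ones via $a_q(v)=P_q^{-1}a(v)P_q$, uses the telescoping $a_q(w_n)\cdots a_q(w_1)|_{\mathcal{H}^{\otimes(n+k)}}=(P_q^k)^{-1}a(w_n)\cdots a(w_1)P_q^{n+k}$, and matches both sides of the formula explicitly using $R^{\ast}_{n+k,k}=((P_q^n)^{-1}\otimes(P_q^k)^{-1})P_q^{n+k}$. You instead argue by duality: the creation part is $\op{Id}_{n,k}\circ(\lambda\otimes\op{Id}_k)$, the annihilation part is the adjoint of a creation map, the identity $\op{Id}_{n,k}^{\ast}=R^{\ast}_{n+k,k}$ converts that adjoint into the desired form, and the unknown rank-one middle factor is pinned down by specialising to $k=0$. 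Both routes rest on the same identity of Nou ($P_q^{(n+k)}=(P_q^{(n)}\otimes P_q^{(k)})R^{\ast}_{n+k,k}$, equivalently the adjoint relation you use), but your version avoids all explicit manipulation of free annihilators and of the functional $\mu$, at the price of having to check that the restriction of the annihilation word to $\mathcal{H}_q^{\otimes(n+k)}$ really is the adjoint of the corresponding restricted creation map with a $k$-independent $\mu$ --- which holds because these operators are graded. I find your (i) slightly cleaner; the paper's is more self-contained in that it exhibits the formula rather than characterising it.
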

We defer the proof of the lemma for later, as we would like to first show how it helps in proving the main result.

\begin{proof}[Proof of Theorem \ref{kernel}] 
First, we would like to show that the task of proving triviality of the kernel can be reduced to a slightly easier one. To show that the kernel is trivial, it suffices to look at positive elements, since the kernel is an ideal, in particular a $C^{\ast}$-algebra, therefore it is spanned by positive elements. Suppose that $x$ is in the kernel and is positive. There is an action of the circle group (in our case it is the interval $[0,2\pi]$ with endpoints identified) on $\mathcal{T}_q(\mathcal{K}, \mathcal{H})$ given by  $t \stackrel{\alpha}{\mapsto} \mathcal{F}_q(e^{it}) x \mathcal{F}_q(e^{-it})$. This action leaves the kernel invariant, therefore the element $\mathbb{E}x:= \frac{1}{2\pi} \int_{0}^{2\pi} \mathcal{F}_q(e^{it})x \mathcal{F}_q(e^{-it}) \dd t$ is also in the kernel and is invariant by the action of the circle group defined above (this action is used by Pimsner in \cite{MR1426840} to show the universality of the usual Toeplitz algebra). It is a simple matter to check that the fixed point subalgebra is equal to the closed linear span of the elements of the form $a_q^{\ast}(\mathbf{v}_n) a_q(\overline{\mathbf{w}}_n)$, where $\mathbf{v}_n \in \mathcal{H}^{\otimes n}, \overline{\mathbf{w}}_n \in \overline{\mathcal{H}}^{\otimes n}$ and $n$ ranges over non-negative integers, and $\mathbb{E}$ is a faithful conditional expectation onto this fixed point subalgebra. So it suffices to show that there are no non-zero positive elements in this fixed point subalgebra that are in the kernel.

Suppose that $x$ is in the kernel and belongs to the subalgebra fixed by the circle action. We need to show that $x_{|\mathcal{H}^{\otimes n}} = 0$ for any $n \geqslant 0$; note that, as vector spaces, $\mathcal{H}_q^{\otimes n} = \mathcal{H}^{\otimes n}$, which follows from \cite[Lemma 1]{MR2091676}. We will prove the statement inductively. Fix a sequence $(x_k)_{k \in \NN}$ that approximates $x$ in norm and is contained in the \emph{non-closed} sum of the subspaces $\left(\mathcal{T}_q(\mathcal{H})\right)_n$. Therefore every $x_k$ admits a decomposition $x_k = \sum_{l=0}^{n_k} x_k^{(l)}$, where $x_k^{(l)} \in \left(\mathcal{T}_q(\mathcal{H})\right)_{l}$ and $n_k$ is the smallest number such that $x_k^{(l)}=0$ for $l>n_k$.

We would like to now state explicitly the statement we intend to prove by induction: For every $n \in \NN \cup \{0\}$ $x_{|\mathcal{H}^{\otimes n}}=0$ and $\lim_{k \to \infty} \|\sum_{l=0}^{n} x_k^{(l)}\|=0$. Let us start with $n=0$. Our inductive statement for $n=0$ means just that $P_0xP_0=0$ and $\lim_{k \to \infty} \|x_k^{(0)}\|=0$. The first part translates to $x\Omega=0$ and it follows from the fact that $\Omega \in \mathcal{K}$ and $x$ belongs to the kernel. For the other part, it follows from Lemma \ref{norm} that an element $y_l$ of length $l$ satisfies an inequality $\|y_l\| \leqslant C(q) \|P_ly_lP_l\|$, where $C(q)$ is a positive constant depending only on $q$. In our case we get $\|x_k^{(0)}\| \leqslant C(q) \|P_0 x_k^{(0)}P_0\|$. We know that $P_0xP_0=0$ and $P_0x_kP_0$ converges to $P_0xP_0$ in norm. However, $P_0x_k P_0 = P_0 x_k^{(0)}P_0$, since elements of length greater than $0$ annihilate the range of $P_0$.

Assume now that our statement has been proved for $m < n$ -- we would like to show that it is also true for $n$. Use the decomposition $\mathcal{H} = \mathcal{K} \oplus \mathcal{K}^{\perp}$ to write $\mathcal{H}^{\otimes n} = \mathcal{K}^{\otimes n} \oplus \mathcal{H}'$, where $\mathcal{H}'$ is a direct sum of tensor products of the spaces $\mathcal{K}$ and $\mathcal{K}^{\perp}$, where at least one factor is equal to $\mathcal{K}^{\perp}$. We would like to show that $x$ restricted to each of the tensor products vanishes. Since $x$ is in the kernel, we get it for $x_{|\mathcal{K}^{\otimes n}}$. Let $\mathcal{K}'$ be any other summand. We will show that $x_{k}(\mathbf{e})$ converges to $0$ for any simple tensor $\mathbf{e} \in \mathcal{K}'$. Since $\mathbf{e}$ is of length $n$, we get $x_k(\mathbf{e}) = \sum_{l=0}^{n} x_k^{(l)}(\mathbf{e})$. By the inductive assumption, we know that $\sum_{l=0}^{n-1} x_k^{(l)}$ converges in norm to $0$, so we are left with $x_k^{(n)}(\mathbf{e})$. But every summand in $x_k^{(n)}$ is of the form $a_q^{\ast}(\mathbf{v}_n) a_q(\overline{\mathbf{w}}_n)$ and $a_q^{\ast}(\mathbf{v}_n) a_q(\overline{\mathbf{w}}_n) \mathbf{e} = \langle \Sigma \mathbf{w}_n, \mathbf{e}\rangle \mathbf{v}_n$, where $\Sigma: \mathcal{H}^{\otimes n} \to \mathcal{H}^{\otimes n}$ is the \textbf{flip map}, taking $h_1 \otimes \dots \otimes h_n$ to $h_n \otimes \dots \otimes h_1$. Since $\mathbf{e}$ possesses a vector from $\mathcal{K}^{\perp}$ in its tensor decomposition, $\langle \Sigma \mathbf{w}_n, \mathbf{e} \rangle = 0$. It follows that $x_{|\mathcal{H}^{\otimes n}}=0$. This implies that  $\lim_{k \to \infty} \|P_n x_k P_n\|=0$. Since $P_n x_k P_n = \sum_{l=0}^{n} P_n x_k^{(l)} P_n$ and $\lim_{k\to \infty} \|\sum_{l=0}^{n-1} x_k^{(l)}\|=0$, we get that $\lim_{k \to \infty} \|P_n x_k^{(n)} P_n\|=0$. Using Lemma \ref{norm}, we conclude that $\lim_{k\to \infty} \|x_k^{(n)}\|=0$, therefore $\lim_{k \to \infty} \|\sum_{l=0}^{n} x_k^{(l)}\|=0$.
\end{proof}
To finish the proof, we need to prove Lemma \ref{norm}. To do that, we will need one simple lemma.
\begin{lem}\label{L:majorisation}
Let $\mathcal{H}$ be a Hilbert space. Suppose that $A,B$ are positive operators on $\mathcal{H}$ and $T$ is a bounded operator such that $A=BT$. Then $A \leqslant \|T\| B$.
\end{lem}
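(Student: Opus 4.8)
The plan is to deduce the inequality $A\leqslant\|T\|B$ by first establishing the corresponding inequality between the \emph{squares}, $A^{2}\leqslant\|T\|^{2}B^{2}$, where the computation is purely algebraic, and only then passing back down to the operators themselves via the operator monotonicity of the square root.

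Concretely, I would argue as follows. Since $A$ is positive it is in particular self-adjoint, so taking adjoints in $A=BT$ gives $A=A^{\ast}=(BT)^{\ast}=T^{\ast}B^{\ast}=T^{\ast}B$; thus $A$ is represented \emph{both} as $BT$ and as $T^{\ast}B$. Multiplying the two representations yields $A^{2}=(BT)(T^{\ast}B)=B(TT^{\ast})B$. Now $TT^{\ast}\leqslant\|TT^{\ast}\|\,\op{Id}=\|T\|^{2}\,\op{Id}$ by the $C^{\ast}$-identity $\|TT^{\ast}\|=\|T\|^{2}$, and conjugating this inequality by the positive operator $B$ preserves the order, so $A^{2}=B(TT^{\ast})B\leqslant\|T\|^{2}B^{2}=(\|T\|B)^{2}$. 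Finally, $A$ and $\|T\|B$ are positive operators with $A^{2}\leqslant(\|T\|B)^{2}$, so the operator monotonicity of $t\mapsto\sqrt{t}$ on $[0,\infty)$ lets us apply the square root to both sides and conclude $A=\sqrt{A^{2}}\leqslant\sqrt{(\|T\|B)^{2}}=\|T\|B$, using $A\geqslant0$ for the first equality.

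The only step here that is not a one-line manipulation is the last one, the operator monotonicity of the square root; in the present generality this is the one genuine input, and it is essential that it be invoked in exactly this direction, because the map $X\mapsto X^{2}$ is \emph{not} operator monotone, so the argument cannot simply be run backwards. Note also that the proof uses no invertibility or further regularity of $A$ or $B$, which is precisely what is needed in order to apply the lemma, as will be done for Lemma~\ref{norm}, to operators of the form $P_{n}xP_{n}$.
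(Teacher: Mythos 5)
Your proof is correct and follows exactly the same route as the paper: take adjoints to get $A=T^{\ast}B$, multiply to obtain $A^{2}=B(TT^{\ast})B\leqslant\|T\|^{2}B^{2}$, and conclude by operator monotonicity of the square root. You spell out the conjugation step $TT^{\ast}\leqslant\|T\|^{2}\op{Id}$ slightly more explicitly than the paper does, but the argument is the same.
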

\begin{proof}
By taking the adjoint, we get $A = T^{\ast}B$, hence $A^2 = BTT^{\ast}B$. It follows that $A^2 \leqslant \|T\|^2 B^2$. The majorisation $A \leqslant \|T\| B$ is implied by the operator monotonicity of the square root.
\end{proof}
\begin{proof}[Proof of Lemma \ref{norm}]
\begin{enumerate}[{\normalfont (i)}, wide, labelwidth=!, labelindent=0pt]
\item Fix $x$ of length $n$ -- it is a linear combination of elements of the form $a_q^{\ast}(\mathbf{v}_n) a(\overline{\mathbf{w}}_n)$. Since the formula
\begin{equation}\label{toeplitz}
P_{n+k} x P_{n+k} = (P_n x P_n \otimes \op{Id}_k) R_{n+k,k}^{\ast},
\end{equation}
is linear in $x$, it suffices to prove it for $x$ of the form $a_q^{\ast}(\mathbf{v}_n) a(\overline{\mathbf{w}}_n)$, where $\mathbf{v}_n=v_1\otimes\dots\otimes v_n$ and $\mathbf{w}_n=w_n\otimes \dots \otimes w_1$ are simple tensors.
Fix $\mathbf{e} \in \mathcal{H}^{\otimes n+k}$; we have to check that $x e = (x \otimes \op{Id}_k) R_{n+k,k}^{\ast} e$. Note that the action of creation operators does not depend on the tensor power on which they act -- it always boils down to tensoring by a vector on the left. Therefore we need only to concern ourselves with annihilation operators. We would like to express $q$-annihilation operators $a_q(v)$ in terms of free annihilation operators $a(v):=a_0(v)$. Note that $a_q^{\ast}(v) = a^{\ast}(v)$, so for any finite tensors $\mathbf{y}$ and $\mathbf{z}$ we get $\langle \mathbf{z}, a_q(v) \mathbf{y}\rangle_q = \langle a^{\ast}(v) \mathbf{z}, \mathbf{y} \rangle_q$. Let $P_q^n$ be the positive operator on $\mathcal{H}^{\otimes n}$ defining the $q$-deformed inner product; let us denote by $P_q$ the direct sum of all the operators $P_q^n$. Using the definition of the $q$-deformed inner product, we arrive at
$$
\langle \mathbf{z}, P_q a_q(v)\mathbf{y}\rangle_0 = \langle P_q a^{\ast}(v) \mathbf{z}, \mathbf{y} \rangle_0.
$$
It follows that $P_q a_q(v) = (P_q a^{\ast}(v))^{\ast} = a(v) P_q$, so $a_q(v) = P_q^{-1} a(v) P_q$. If we restrict this equality to $\mathcal{H}^{\otimes n}$, we get $a_q(v)_{|\mathcal{H}^{\otimes n}} = (P_q^{n-1})^{-1} a(v) P_q^n$. Let us compute the left-hand side of \eqref{toeplitz}:
$$
a_q(w_n)\dots a_q(w_1) \mathbf{e} = (P_q^k)^{-1}a(w_n)\dots a(w_1) P_q^{n+k} \mathbf{e}. 
$$
This formula follows from the fact that first we change $a_q(w_1)$ to $(P_q^{n+k-1})^{-1}a(w_1)P_q^{n+k}$, but then $a_q(w_2)$ has to be changed to $(P_q^{n+k-2})^{-1}a(w_2)P_q^{n+k-1}$ and there is a cancellation between $a(w_2)$ and $a(w_1)$; using this fact repeatedly, we obtain the above formula. To calculate the right-hand side, recall (cf. \cite[Formula $2$ on page $21$]{MR2091676}) that we have an equality $P_q^{n+k} = (P_q^n \otimes P_q^k)R_{n+k,k}^{\ast}$, so $R_{n+k,k}^{\ast} = ((P_q^n)^{-1} \otimes (P_q^k)^{-1}) P_q^{n+k}$. It leads us to: 
\begin{align}
&(a_q(w_n)\dots a_q(w_1) \otimes \op{Id}_k) ((P_q^n)^{-1} \otimes (P_q^k)^{-1}) P_q^{n+k}\mathbf{e} \notag \\
&= (a_q(w_n)\dots a_q(w_1) (P_q^n)^{-1} \otimes (P_q^k)^{-1})P_q^{n+k} \mathbf{e} \notag \\
&= (a(w_n)\dots a(w_1) \otimes (P_q^k)^{-1}) P_q^{n+k} \mathbf{e}. \notag
\end{align}
We now only need to understand that this is exactly the same formula. It follows from the fact that the free annihilation operators act only on the $n$ leftmost vectors, so the operator $(P_q^k)^{-1}$ in both situations acts only on the $k$ rightmost ones.
\item
First of all, since the spaces $\mathcal{H}_q^{\otimes k}$ are left invariant by $x$, we have $\|x\| = \sup_{k \geqslant 0} \|P_k x P_k\|$. Because $P_k x P_k = 0$ for $k<n$, we actually get $\|x\| = \sup_{k \geqslant 0} \|P_{n+k} x P_{n+k}\|$. We just have to show that $\|P_{n+k}x P_{n+k}\| \leqslant C(q) \|P_n x P_n\|$. From the first part of the proof we get that $\|P_{n+k}x P_{n+k}\| \leqslant \|\op{Id}_{n,k}\|\cdot\|R^{\ast}_{n+k,n}\|\cdot \|P_n x P_n\|$. It is known (cf. \cite[Formula 2 on page 21]{MR2091676}) that $\|R^{\ast}_{n+k,n}\| \leqslant C(q)$, where $C(q) = \prod_{k=1}^{\infty} (1-|q|^{k})^{-1}$ and $R^{\ast}_{n+k,n}$ is seen as an operator on $\mathcal{H}^{\otimes (n+k)}$, where $\mathcal{H}^{\otimes (n+k)}$ is equipped with the standard inner product, not the $q$-deformed one. It follows from Lemma \ref{L:majorisation} that $P_q^{(n+k)} \leqslant C(q) P_q^{(n)} \otimes P_q^{(k)}$ as operators on $\mathcal{H}^{\otimes (n+k)}$, since $P_q^{(n+k)} = (P_q^{(n)} \otimes P_q^{(k)})R^{\ast}_{n+k,k}$. Because $P_q^{(n+k)}$ defines the inner product on $\mathcal{H}_q^{\otimes (n+k)}$, and $P_q^{(n)} \otimes P_q^{(k)}$ defines the inner product on $\mathcal{H}_q^{\otimes n} \otimes \mathcal{H}_q^{\otimes k}$, it follows that the identity map $\op{Id}_{n,k}: \mathcal{H}_q^{\otimes n} \otimes \mathcal{H}_q^{\otimes k} \to \mathcal{H}_q^{\otimes (n+k)}$ has norm not greater than $\sqrt{C(q)}$. Finally, $(\op{Id}_{n,k})^{\ast} = R^{\ast}_{n+k,n}$, so $\|R^{\ast}_{n+k,n}\| \leqslant \sqrt{C(q)}$ as an operator mapping $\mathcal{H}_q^{\otimes (n+k)}$ to $\mathcal{H}_q^{\otimes n} \otimes \mathcal{H}_q^{\otimes k}$. This shows that $\|P_{n+k} x P_{n+k}\| \leqslant C(q) \|P_n x P_n\|$.
\end{enumerate}
\end{proof}
\section{Haagerup approximation property}\label{Haag}
To prove the Haagerup property, we need to use one more lemma.
\begin{lem}[Houdayer--Ricard, \cite{MR2822210}]
There exists a sequence $(T_k)_{k \in \NN}$ of finite rank contractions on $\mathcal{H}$ such that $IT_k I=T_k$ and $\lim_{k \to \infty} T_k = \mathds{1}$ strongly.
\end{lem}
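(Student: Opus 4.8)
The plan is to build the $T_k$ by spectral surgery on the operator $A$: first peel off the part of $\op{spec}(A)$ near $0$ and $\infty$, then localise finely enough in $\op{spec}(A)$ that the deformed inner product $\langle\cdot,\cdot\rangle_{\mathsf{U}}$ becomes comparable to the original one with constant tending to $1$. Throughout I use the relation $IAI=A^{-1}$, which follows from $\mathsf{U}_tI=I\mathsf{U}_t$ on $\mathcal H_{\CC}$ together with the anti-linearity of $I$, as well as the fact that $A$ is $\langle\cdot,\cdot\rangle_{\mathsf{U}}$-self-adjoint on $\mathcal H$ (the $\mathsf{U}_t=A^{it}$ being $\langle\cdot,\cdot\rangle_{\mathsf{U}}$-unitaries). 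For $n\in\NN$ put $P_n:=\mathds 1_{[1/n,n]}(A)$: this is a $\langle\cdot,\cdot\rangle_{\mathsf{U}}$-orthogonal projection on $\mathcal H$, it satisfies $IP_nI=P_n$ because $[1/n,n]$ is stable under $x\mapsto x^{-1}$, and $P_n\to\mathds 1$ strongly. Fixing a dense sequence in $\mathcal H$ and running a diagonal argument over $n$, it therefore suffices to produce, for each fixed $n$, a sequence of finite-rank $\langle\cdot,\cdot\rangle_{\mathsf{U}}$-contractions on $\mathcal H^{(n)}:=P_n\mathcal H$ commuting with $I$ (i.e.\ satisfying $IT_kI=T_k$) and converging strongly to $\mathds 1_{\mathcal H^{(n)}}$; on $\mathcal H^{(n)}$ the operator $A$ is bounded with bounded inverse.

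Next, decompose $\mathcal H^{(n)}=\mathcal H_0\oplus\mathcal H_+\oplus\mathcal H_-$, where $\mathcal H_0=\ker(A-\mathds 1)$ and $\mathcal H_\pm$ are the spectral subspaces of $A$ for $(1,n]$, resp.\ $[1/n,1)$; these are mutually $\langle\cdot,\cdot\rangle_{\mathsf{U}}$-orthogonal, $I$ preserves $\mathcal H_0$ and exchanges $\mathcal H_+$ with $\mathcal H_-$. On $\mathcal H_0$ the two inner products coincide and $I$ is a genuine conjugation, so this is the complexified-real-Hilbert-space situation and the orthogonal projections onto an exhausting increasing chain of finite-dimensional $I$-invariant subspaces work. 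For the other summand, identify $\mathcal H_-=I\mathcal H_+$ with $\overline{\mathcal H_+}$; then $I$ becomes the flip $(\xi,\bar\eta)\mapsto(\eta,\bar\xi)$, the operators commuting with $I$ are exactly those of block form $\left[\begin{smallmatrix}a&b\\ \bar b&\bar a\end{smallmatrix}\right]$, and a short computation gives $\|(\xi,\bar\eta)\|_{\mathsf{U}}^2=\|(\mathds 1+\theta)^{1/2}\xi\|^2+\|(\mathds 1-\theta)^{1/2}\eta\|^2$, where $\theta:=(A-\mathds 1)(A+\mathds 1)^{-1}$ on $\mathcal H_+$ satisfies $0\le\theta\le\frac{n-1}{n+1}\mathds 1<\mathds 1$.

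It remains to approximate $\mathds 1$ on $\mathcal H_+\oplus\mathcal H_-$ by finite-rank block-diagonal operators $E\oplus\bar E$, with $E$ an orthogonal projection of $\mathcal H_+$; such operators commute with $I$ and have finite rank, but --- and this is the crux --- they are \emph{not} $\langle\cdot,\cdot\rangle_{\mathsf{U}}$-contractions in general, because $I$ is not $\langle\cdot,\cdot\rangle_{\mathsf{U}}$-isometric, so $E$ must be taken subordinate to a fine spectral partition of $\theta$. Given $\varepsilon>0$ and finitely many vectors, pick a finite partition $[0,\frac{n-1}{n+1}]=\bigsqcup_i J_i$ so fine that $\sup_{J_i}(1+s)\le(1+\varepsilon)\inf_{J_i}(1+s)$ and $1-\inf J_i\le(1+\varepsilon)(1-\sup J_i)$ for every $i$ --- possible by uniform continuity on this compact interval, with $1-\sup J_i$ staying positive; set $\mathcal H_{+,i}:=\mathds 1_{J_i}(\theta)\mathcal H_+$, choose finite-dimensional $L_i\subset\mathcal H_{+,i}$ containing $\mathds 1_{J_i}(\theta)\xi$ and $\mathds 1_{J_i}(\theta)\eta$ for every given vector $(\xi,\bar\eta)$, and let $E$ be the orthogonal projection of $\mathcal H_+$ onto $\bigoplus_iL_i$. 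Then $E$ commutes with each $\mathds 1_{J_i}(\theta)$, so $\|(\mathds 1\pm\theta)^{1/2}E(\mathds 1\pm\theta)^{-1/2}\|\le\sqrt{1+\varepsilon}$ and hence $\|E\oplus\bar E\|_{\mathsf{U}}\le\sqrt{1+\varepsilon}$, while $E\oplus\bar E$ fixes the chosen vectors exactly. Letting $\varepsilon=\varepsilon_k\to 0$ while the vector sets exhaust a dense sequence, the operators $(1+\varepsilon_k)^{-1/2}(E_k\oplus\bar E_k)$ are finite-rank, commute with $I$, are $\langle\cdot,\cdot\rangle_{\mathsf{U}}$-contractions, and converge strongly to $\mathds 1$. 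Reassembling with the $\mathcal H_0$-piece and undoing the reduction of the first paragraph finishes the proof. I expect this last step to be the main difficulty: the tension between being finite rank (which essentially forces the use of projections, and projections are contractive only for the original inner product) and being an exact contraction for $\langle\cdot,\cdot\rangle_{\mathsf{U}}$; the resolution is to localise in $\op{spec}(A)$, where the two inner products become proportional up to a factor converging to $1$.
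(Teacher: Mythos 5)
The paper does not actually prove this lemma --- it is quoted verbatim from Houdayer--Ricard \cite{MR2822210} with no argument supplied --- so there is no internal proof to compare against. Your reconstruction is correct and follows the same strategy as the cited source: the identity $IAI=A^{-1}$ makes the spectral projections $\mathds{1}_{[1/n,n]}(A)$ commute with $I$, the splitting into $\ker(A-\mathds{1})$ and the two inversion-paired spectral halves is the right reduction, and the fine spectral partition of $\theta=(A-\mathds{1})(A+\mathds{1})^{-1}$ correctly resolves the only real difficulty, namely that finite-rank orthogonal projections are contractions for $\langle\cdot,\cdot\rangle$ but not for $\langle\cdot,\cdot\rangle_{\mathsf{U}}$ unless they almost commute with $A$. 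I checked the norm identity $\|\xi+I\eta\|_{\mathsf{U}}^{2}=\|(\mathds{1}+\theta)^{1/2}\xi\|^{2}+\|(\mathds{1}-\theta)^{1/2}\eta\|^{2}$ and the estimate $\|(\mathds{1}\pm\theta)^{1/2}E(\mathds{1}\pm\theta)^{-1/2}\|\leqslant\sqrt{1+\varepsilon}$; both are right, and the reassembly (composing with $P_n$ and diagonalising) goes through since $P_n$ is a $\langle\cdot,\cdot\rangle_{\mathsf{U}}$-orthogonal projection commuting with $I$.
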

We are now ready to prove our main result.
\begin{proof}[Proof of Theorem \ref{approx}]
Consider the ucp maps $v_{k,t} := \Gamma_q(e^{-t} T_k)$ -- they preserve the vacuum state. We would like to prove that the GNS-implementations of these maps converge strongly to identity and are compact. First of all, by definition, the GNS implementations of these maps are equal to $\mathcal{F}_q(e^{-t} T_k)$. Let us then check compactness. Recall that we denote by $P_n: \mathcal{F}_q(\mathcal{H}) \to \mathcal{F}_q(\mathcal{H})$ the orthogonal projection onto first $n$ summands in the direct sum decomposition of the Fock space. Since $T_k$ is a finite-rank operator, so is $P_n\mathcal{F}_q(e^{-t}T_k)$. We have to show that the norm of $P_n^{\perp}\mathcal{F}_q(e^{-t} T_k)$ converges to $0$, when $n \to \infty$. First of all, let us reduce to the case $q=0$. Operator $P_q$ preserves all the tensor powers appearing in the direct sum decomposition of the Fock space, therefore it commutes with $P_n^{\perp}$. It also commutes with the first quantisation operators $\mathcal{F}_q(e^{-t} T_k)$. It follows from Lemma $1.4$ in \cite{MR1463036} that the norm of $P_n^{\perp} \mathcal{F}_q(e^{-t} T_k)$ does not change if we compute it on the free Fock space $\mathcal{F}_0(\mathcal{H})$; this is the norm that we will estimate. This is easy when $T_k$ is self-adjoint and we will now show that one can assume that. Indeed, the first quantisation on the level of the Fock space interacts nicely with taking the adjoint, so we get (by the $C^{\ast}$-identity) 
$$
\|P_n^{\perp}\mathcal{F}_0(e^{-t}T_k)\|^2 = \|P_n^{\perp}\mathcal{F}_0 (e^{-2t}T_k^{\ast}T_k) P_n^{\perp}\|.
$$
Now $T_k^{\ast}T_k$ is a finite rank positive contraction, so there is an orthonormal basis $(e_i)_{i \in \NN}$ of $\mathcal{H}$ such that $T_k^{\ast}T_k e_i = \lambda_i e_i$ and $\lambda_i \in [0,1]$. From the orthonormal basis of eigenvectors of $T_k^{\ast}T_k$ we can build an orthonormal basis of $\mathcal{F}_0(\mathcal{H})$, using tensor powers; for a multi-index $I=\{i_1,\dots, i_k\}$ we will denote $e_{I} = e_{i_1} \otimes \dots \otimes e_{i_k}$ and $\lambda_I = \lambda_{i_1}\cdots\lambda_{i_k}$. We can now estimate the norm of $P_n^{\perp}\mathcal{F}_q (e^{-2t}T_k^{\ast}T_k) P_n^{\perp}$. Let $v \in \mathcal{F}_0(\mathcal{H})$ be written as $v = \sum a_I e_I$, then
\begin{align}
\|P_n^{\perp}\mathcal{F}_0 (e^{-2t}T_k^{\ast}T_k) P_n^{\perp} v\|^2 &= \|\sum_{|I|> n} e^{-2|I|} a_I \lambda_{I} e_{I}\|^2 \notag \\
&= \sum_{|I|> n} e^{-4|I|} |a_{I}|^2 |\lambda_{I}|^2 \notag \\
&\leqslant e^{-4(n+1)} \|v\|^2, \notag
\end{align}
because $|\lambda_I| \leqslant 1$. The fact that the operators $\mathcal{F}_q(e^{-t} T_k)$ converge strongly to the identity when $t \to 0$ and $k\to \infty$ is clear; it can be easily checked on finite simple tensors and this suffices, since they are all contractive. This ends the proof.
\end{proof}
\section{Acknowledgements}
I am grateful to my advisor, Adam Skalski, for helpful discussions and careful reading of the draft version of this paper. I would also like to thank Michael Brannan for useful remarks. 
\bibliographystyle{alpha}
\bibliography{../research}

\end{document}